\documentclass[preprint]{elsarticle}
\usepackage{cmap} 
           \usepackage[T2A]{fontenc}   
           \usepackage[utf8]{inputenc}
           \usepackage[english]{babel}
\usepackage{amsmath}
\usepackage{amsfonts}
\usepackage{amssymb}
\usepackage{amsthm}
\usepackage[pdftex,unicode]{hyperref}
\usepackage{indentfirst} 
\usepackage{amscd}
\usepackage[all,cmtip]{xy}
\usepackage{mathtools}



\def\nfs/{NFS}
\def\cdp/{CDP}
\def\cdpz/{CDP${}_0$}







\def\bbset#1:#2\eeset{\{#1\,:\,#2\}}

\def\bbsett#1:#2\eesett{\{#1\,:\,\text{#2}\}}

\def\ibbset#1:#2\ieeset{(#1)_{#2}}






\newcommand\restrA[2]{{
  \left.\kern-\nulldelimiterspace 
  #1 
  \vphantom{\big|} 
  \right|_{#2} 
  }}

\def\pwr#1_#2{#1^{[#2]}}

\def\dddm#1(#2){N_{#1}(#2)}
\def\dddb#1(#2){B_{#1}(#2)}


\def\et(#1){ (#1)}

\def\bitem#1,#2.{ $#2\nrightarrow #1$:\ }


\newtheorem{proposition}{Proposition}
\newtheorem{theorem}{Theorem}
\newtheorem{lemma}{Lemma}
\newtheorem*{lemma*}{Lemma}
\newtheorem{cor}{Corollary}

\theoremstyle{definition}
\newtheorem{definition}{Definition}

\def\oo#1/{$O_{#1}$}



\def\gd/{$G_\delta$}

\def\kwd{}
\def\reftext{}

\begin{document}
\begin{frontmatter}

\title{Extension of mappings from the product of pseudocompact spaces}
\author{Evgenii Reznichenko}

\ead{erezn@inbox.ru}
\address{Department of General Topology and Geometry, Mechanics and Mathematics Faculty, M.~V.~Lomonosov Moscow State University, Leninskie Gory 1, Moscow, 199991 Russia}

\begin{abstract}
Let $X$ and $Y$ be pseudocompact spaces and let a function
$\Phi: X\times Y\to\mathbb R$ be separately continuous. The following
conditions are equivalent: (1) there is a dense $G_{\delta}$ subset of
$D\subset Y$ such that $\Phi$ is continuous at every point of
$X\times D$ (Namioka property); (2) $\Phi$ is quasicontinuous; (3)
$\Phi$ extends to a separately continuous function on
$\operatorname{\beta}X\times\operatorname{\beta}Y$. This theorem makes
it possible to combine studies of the Namioka property and generalizations
of the Eberlein-Grothendieck theorem on the precompactness of subsets of
function spaces. We also obtain a characterization of separately continuous
functions on the product of several pseudocompact spaces extending to separately
continuous functions on products of Stone--\v Cech extensions of spaces.
These results are used to study groups and Mal'tsev spaces with separately
continuous operations.
\end{abstract}

\begin{keyword}
\kwd{Extension of functions}
\sep
\kwd{Stone-\v{C}ech extension}
\sep
\kwd{Pseudocompact spaces}
\sep
\kwd{Quasi-continuous functions}
\sep
\kwd{Mal'tsev spaces}
\sep
\kwd{Eberlein-Grothendieck theorem}
\end{keyword}
\end{frontmatter}

\section{Introduction}
\label{sec1}

Let $X$ and $Y$ be topological spaces. A function
$\Phi: X\times Y\to\mathbb R$ is separately continuous if the functions
$\Phi(\cdot,y): X\to\mathbb R$ and
$\Phi(x,\cdot): Y\to\mathbb R$ are continuous for $x\in X$ and
$y\in Y$. The consideration of separate continuity vis-\`a-vis joint continuity
goes back, at least, to Bare 1899 \cite{baire1899}, whose work is the prototype
of all the subsequent investigations on this subject by many mathematicians.

A function $f: Z\to\mathbb R$ is called \textit{quasi-continuous} if for
every point $z\in Z$, neighborhood $O$ of $f(z)$, neighborhood $W$ of
$z$ there exists a non-empty open $U\subset W$ such that $f(U)\subset O$.

The following continuity conditions for the function $\Phi$ are considered.
\begin{itemize}[($C_{3}$)]
\item[{\textup{($C_{1}$)}}] There is a dense type $G_{\delta}$ subset
$D\subset Y=\overline{D}$ such that $\Phi$ is (jointly) continuous at every
point $(x,y)\in X\times D$ \cite{namioka1974}.
\item[{\textup{($C_{2}$)}}] The function $\Phi$ is quasi-continuous.
\item[{\textup{($C_{3}$)}}] The function $\Phi$ extends to a separately
continuous function
$\widehat{\Phi}: \operatorname{\beta}X\times Y\to\mathbb R$, where
$\operatorname{\beta}X$ is the Stone--\v Cech extension of $X$
\cite{Reznichenko1994}.
\end{itemize}
Clearly, \textup{($C_{1}$)} implies \textup{($C_{2}$)}. Spaces
$X$ and
$Y$ satisfy the \textit{Namioka property} ${\mathcal{N}}( X , Y )$ if for
every separately continuous map $\Phi$ the condition \textup{($C_{1}$)}
 is satisfied \cite{namioka1974}. If the condition \textup{($C_{1}$)}
is satisfied,
then the function $\Phi$ is also said to satisfy the Namioka
property. We say that $(X,Y)$ is a \textit{Grothendieck pair} if for every
separately continuous map $\Phi$ the condition \textup{($C_{3}$)} is satisfied
\cite{Reznichenko1994}.

Note that \cite{Reznichenko1994} gave a different definition:
$(X,Y)$ is a \textit{Grothendieck pair} if for every continuous map
$\varphi: X\to C_{p}(Y)$ the closure of $\varphi( X)$ in $C_{p}(Y)$ is compact,
where $C_{p}(Y)$ is the space of continuous functions on $Y$ in the topology
of pointwise convergence \cite[Definition 1.7]{Reznichenko1994}.
Assertion 1.2 of \cite{Reznichenko1994} implies that these two definitions
are equivalent.

An important special case of the general situation is when the spaces
$X$ and $Y$ are pseudocompact. This case was mainly considered in
\cite{Reznichenko1994,ReznichenkoUspenskij1998}. The main result of this
paper is that in this case, if the function $\Phi$ is separately continuous,
then the conditions \textup{($C_{1}$)}, \textup{($C_{2}$)} and
\textup{($C_{3}$)}
are equivalent (\reftext{Theorem~\ref{t:ef:1}}). This implies that if the spaces
$X$ and $Y$ are pseudocompact, then $(X,Y)$ is a Grothendieck pair if and
only if $X$ and $Y$ satisfy the Namioka property (\reftext{Theorem~\ref{t:fsv:1}}). This theorem allows using the Namioka property theorems
to find Grothendieck pairs and vice versa, the Grothendieck pair theorems
to find pairs of spaces with the Namioka property.

A space $Y$ is called a weakly $pc$-Grothendieck space if any pseudocompact
subspace of $C_{p}(Y)$ has a compact closure in $C_{p}(Y)$
\cite{Arhangelskii1997}. In other words, $Y$ is a weakly $pc$-Grothendieck
space if and only if $(X,Y)$ is a Grothendieck pair for any pseudocompact
space $X$. \reftext{Theorem~\ref{t:fsv:1}} allows one to find new classes of
pseudocompact
weakly $pc$-Grothendieck spaces.

Using \reftext{Theorem~\ref{t:ef:1}} and the results of \cite{Reznichenko2022} in
Section~\ref{sec-fsv}, we obtain a criterion for a function of several
variables on a product of pseudocompact spaces  to extend to a product
of Stone--\v Cech extensions (\reftext{Theorem~\ref{t:fsv:1}} and \reftext{Theorem~\ref{t:fsv:2}}). Using the results of Section~\ref{sec-ef} in 
Section~\ref{sec-maltsev}, we obtain theorems on the continuity of operations
in Mal'tsev groups and spaces.

The terminology follows the books
\cite{Arkhangelskii1992cpbook,EngelkingBookGT}. By spaces we mean Tikhonoff
spaces.

\section{Extension of functions from a product of spaces}
\label{sec-ef}

%
\begin{proposition}%
\label{p:ef:1}
Let $X$ and $Y$ be pseudocompact spaces and
$\Phi: X\times Y \to\mathbb R$ be a separately continuous quasi-continuous
function. Then $f$ extends to a separately continuous function
$\widehat{\Phi}:\beta X\times Y \to\mathbb R$.
\end{proposition}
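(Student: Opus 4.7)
The plan is to construct $\widehat\Phi$ fiber-wise and then verify separate continuity afterward. Since $X$ is pseudocompact, each slice $\Phi(\cdot,y)$ is a bounded continuous function on $X$ and so extends uniquely to a continuous function on $\beta X$; I set $\widehat\Phi(\cdot,y)$ to be that extension. Continuity of $\widehat\Phi$ in the first variable is then automatic by construction, so the entire content of the proposition reduces to showing that for every $p\in\beta X$ the function $\widehat\Phi(p,\cdot):Y\to\mathbb R$ is continuous.

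To prove that, I would argue by contradiction. Suppose continuity fails at some $y_0$: there exist $\varepsilon>0$ and a net $y_\alpha\to y_0$ with $\widehat\Phi(p,y_\alpha)-\widehat\Phi(p,y_0)>3\varepsilon$ (say). Using continuity of the slices $\widehat\Phi(\cdot,y_0)$ and $\widehat\Phi(\cdot,y_\alpha)$ at $p$ together with density of $X$ in $\beta X$, I pick a neighborhood $W\ni p$ on which $\widehat\Phi(\cdot,y_0)<\widehat\Phi(p,y_0)+\varepsilon/2$ and then points $x_\alpha\in W\cap X$ satisfying
\[
\Phi(x_\alpha,y_\alpha)>\widehat\Phi(p,y_0)+5\varepsilon/2,\qquad \Phi(x_\alpha,y_0)<\widehat\Phi(p,y_0)+\varepsilon/2.
\]
Quasi-continuity of $\Phi$ at each $(x_\alpha,y_\alpha)$ then yields a nonempty open rectangle $U_\alpha\times V_\alpha\subseteq W\times Y$, with $V_\alpha$ placed inside any prescribed neighborhood of $y_0$, such that $\Phi>\widehat\Phi(p,y_0)+2\varepsilon$ on $U_\alpha\times V_\alpha$; picking $y'_\alpha\in V_\alpha$ produces $y'_\alpha\to y_0$ together with $\widehat\Phi(\cdot,y'_\alpha)\ge\widehat\Phi(p,y_0)+2\varepsilon$ on $\overline{U_\alpha}\subseteq\beta X$.

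The decisive step is then to combine these rectangles into a single $q^{*}\in\beta X$ whose slice-values violate the bounds imposed by the already-continuous slice $\widehat\Phi(\cdot,y_0)$. By compactness of $\beta X$, any selection $q_\alpha\in\overline{U_\alpha}$ clusters to some $q^{*}\in\overline W$, which forces $\widehat\Phi(q^{*},y_0)\le\widehat\Phi(p,y_0)+\varepsilon/2$; on the other hand, $\widehat\Phi(q_\alpha,y'_\alpha)\ge\widehat\Phi(p,y_0)+2\varepsilon$, and the contradiction would follow if one could pass this inequality to $\widehat\Phi(q^{*},y_0)$ in the limit. I expect this last passage to be the main obstacle: it superficially asks for joint continuity of $\widehat\Phi$ at $(q^{*},y_0)$, which is strictly stronger than what we are proving. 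The resolution must use quasi-continuity more delicately, most likely through a Baire-category or ultrafilter argument that exploits pseudocompactness of $Y$ to cluster the witnessing sets $V_\alpha$, so as either to transfer the inequality $\Phi>\widehat\Phi(p,y_0)+2\varepsilon$ onto a single open set whose closure in $\beta X$ traps $p$, or to arrange that $q^{*}\in X$, where the honest separate continuity of $\Phi$ is available and closes the argument.
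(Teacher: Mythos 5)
Your reduction is the same as the paper's: extend each bounded slice $\Phi(\cdot,y)$ over $\beta X$, observe that continuity in the first variable is free, and argue by contradiction that $\widehat\Phi(p,\cdot)$ is continuous for each $p\in\beta X$. But the proposal stops exactly where the proof has to begin. You yourself flag the decisive limit passage as ``the main obstacle'' and only speculate about how it ``must'' be resolved; that step is the entire content of the proposition, so as written this is a plan with an acknowledged hole, not a proof. Your closing guess --- arrange that the cluster point lies in $X$, where the slice is honestly continuous --- is indeed the right idea, but realizing it requires a specific construction you do not supply: the paper builds a nested sequence of nonempty open rectangles $U_n\times V_n$ inside the open set $W=\operatorname{Int}\Phi^{-1}((-\infty,\tfrac13))$, with $U_n\subset\bigcap_{i\le n}U_{y_i}$ (to retain the lower bound $\ge\tfrac23$ at the chosen points $y_i\in M$) and $\overline{V_n}\subset V_{n-1}$; pseudocompactness of $X$ then yields a cluster point $x_*\in X$ of the $U_n$, and pseudocompactness of $Y$ clusters auxiliary neighborhoods $S_n$ of the $y_n$ at a point $y_*\in\bigcap_n V_n$, at which the genuinely continuous function $\Phi(x_*,\cdot)$ is forced to be simultaneously $\ge\tfrac12$ and $\le\tfrac13$. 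None of this interleaving (in particular, why the lower bound survives to the cluster point, which needs both the nesting $U_n\subset U_{y_i}$ and the intermediate sets $S_n$) appears in your sketch.

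There is a second, independent gap: to even start the nested construction one must know that $\bar x$ (your $p$) lies in the $\beta X$-closure of the slice $U=\{x\in X:(x,\bar y)\in W\}$ of the \emph{open} set $W$, not merely of $\{x:\Phi(x,\bar y)<\tfrac13\}$. Quasi-continuity alone does not give this, because the small open rectangle it produces need not contain the point itself. The paper handles this with a separate lemma (Lemma~\ref{l:ef:1}): for every $y\in Y$ the set of points of \emph{joint} continuity of $\Phi$ on $X\times\{y\}$ is dense in $X$. That lemma is itself a nontrivial Baire-category argument combined with another double-pseudocompactness construction, and nothing in your proposal plays its role. So the overall architecture matches the paper, but both load-bearing arguments are missing.
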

\begin{proof}
Denote by $C$ the set of points in $X\times Y$ at which the function
$\Phi$ is continuous. Denote $C_{y}=\{x\in X\,:\, (x,y)\in C\}$ for
$y\in Y$.
%
%
\begin{lemma}%
\label{l:ef:1}
The set $C_{y}$ is dense in $X$ for all $y\in Y$.
\end{lemma}
\begin{proof}
Assume the opposite, i.e.
$U'=X\setminus\overline{C_{y'}}\neq\varnothing$ for some
$y'\in Y$. Let us put
\begin{equation*}
\Psi(x,y)=|\Phi(x,y)-\Phi(x,y')|
\end{equation*}
for $(x,y)\in X\times Y$. The function $\Psi$ is non-negative, separately
continuous, quasi-continuous,  and discontinuous 
at points of the set $U'\times\{y'\}$, and $\Psi(x,y')=0$ for $x\in X$. For $O\subset X\times Y$ and
$(x,y)\in X\times Y$ we set
\begin{align*}
\omega_{\Psi}(O) &= \sup\{|\Phi(x_{1},y_{1})-\Phi(x_{2},y_{2})|
\,:\, (x_{1},y_{1}),(x_{2},y_{2})\in O\},
\\
\omega_{\Psi}(x,y) &= \inf\{\omega_{\Psi}(O)\,:\, O
\text{ is a neighborhood of the point }(x,y)\}.
\end{align*}
Let us put
\begin{align*}
F_{n}&=\{(x,y)\in X\times Y \,:\, \omega_{\Psi}(x,y)\geq
\frac{1}{2^{n}}\}, & F_{n}'&=\{ x\in X\,:\, (x,y')\in F_{n}\}
\end{align*}
for $n\in\omega$. The set $F_{n}$ is closed in $X\times Y$ and
$F_{n}'$ is closed in $X$. The set $\bigcup_{n\in\omega} F_{n}$ is the
set of discontinuity points of the function $\Psi$, so
$U'\subset\bigcup_{n\in\omega} F_{n}'$. Since $X$ is a Baire space,
there exists a non-empty open $U\subset U'\cap F_{n}'$ for some
$n\in\omega$. We set $\varepsilon=\frac{1}{3\cdot2^{n}}$ and
\begin{equation*}
M=\{(x,y)\in X\times Y\,:\, \Psi(x,y)> 2\varepsilon\}.
\end{equation*}
Then $U\times\{y'\}\subset\overline{M}$. Let $U_{-1}=U$ and
$V_{-1}=Y$. By induction on $n$ we construct a sequence
\begin{equation*}
(x_{n}, V_{n}, U_{n}, W_{n})_{n\in\omega},
\end{equation*}
where $x_{n}\in U$, $V_{n}\subset Y$ is an open neighborhood of $y'$,
$U_{n}\subset U$ is an open non-empty set, $W_{n}\subset Y$ is an open
non-empty set such that for every $n\in\omega$ the following conditions
are met:
\begin{itemize}
\item[\textup{(1)}] $x_{n}\in U_{n}$ and
$\overline{U_{n}}\subset U_{n-1}$;
\item[\textup{(2)}] $y'\in V_{n}$, $\overline{V_{n}}\subset V_{n-1}$ and
$W_{n}\subset V_{n}$;
\item[\textup{(3)}]
$\Psi(\{x_{n}\}\times V_{n})\subset[0,\varepsilon)$;
\item[\textup{(4)}]
$\Psi(U_{n}\times W_{n})\subset(2\varepsilon,+\infty)$.
\end{itemize}
Let us carry out the construction at the $n$th step. Since
$U\times\{y'\}\subset\overline{M}$, $y'\in V_{n-1}$ and
$U_{n-1}\subset U$, there exists
$(x'',y'')\in M \cap(U_{n-1}\times V_{n-1})$. Then
$\Psi(x'',y'')> 2\varepsilon$. Since the function is quasi-continuous,
$\Psi(U_{n}\times W_{n})\subset(2\varepsilon,+\infty)$ for some non-empty
open $U_{n}\subset\overline{U_{n}}\subset U_{n-1}$ and
$W_{n}\subset\overline{W_{n}}\subset V_{n-1}$. Take
$x_{n}\in U_{n}$. We choose a neighborhood $V_{n}$ of the point $y'$ in
such a way that $\overline{V_{n}}\subset V_{n-1}$ and
$\Psi(\{x_{n}\}\times V_{n})\subset[0,\varepsilon) $.

Let $G=\bigcap_{n\in\omega} U_{n}$. Since $X$ is pseudocompact, then
$G$ is a non-empty closed subset of $X$. Since $Y$ is pseudocompact, the
sequence $(W_{n})_{n\in\omega}$ accumulates to some point
$y_{*}\in Y$. We put $f(x)=\Psi(x,y_{*})$ for $x\in X$. The function
$f: X\to\mathbb R$ is continuous. It follows from (2) that
$y_{*}\in Q=\bigcap_{n\in\omega} V_{n}$. It follows from (4) that
$f(G)\subset[2\varepsilon,+\infty)$. Since $y_{*}\in V_{n}$, it follows
from (3) that $f(x_{n})<\varepsilon$ for $n\in\omega$. Take a neighborhood
$O_{n}$ of the point $x_{n}$ such that $O_{n}\subset U_{n}$ and
$f(O_{n})\subset[0,\varepsilon)$. Since $X$ is pseudocompact, the sequence
$(O_{n})_{n\in\omega}$ accumulates to some point $x_{*}\in G$. Since
$f(O_{n})\subset[0,\varepsilon)$ for $n\in\omega$ we have
$f(x_{*})\leq\varepsilon$. This contradicts the fact that
$x_{*}\in G$ and $f(G)\subset[2\varepsilon,+\infty)$.
\end{proof}
For $x\in X$ and $y\in Y$, denote $\Phi_{y}(x)=\Phi(x,y)$. The function
$\Phi_{y}: X\to\mathbb R$ is continuous and bounded. Let
$\widehat{\Phi}_{y}: \beta X\to\mathbb R$ be a continuous extension of
$\Phi_{y}$. We put
$\widehat{\Phi}(x,y)=\widehat{\Phi}^{x}(y)=\widehat{\Phi}_{y}(x)$ for
$x\in\beta X$ and $y\in Y$. Let us check that the function
$\widehat{\Phi}$ is separately continuous. Let us assume the opposite.
Then $f=\widehat{\Phi}^{\bar x}$ is discontinuous for some
$\bar x\in\beta X$. Let $\bar y\in Y$ be a discontinuity point of
$f$. Without loss of generality, we can assume that $f(\bar y)=0$ and
$\bar y\in\overline{M}$, where $M=f^{-1}([1,+\infty))$. We set
$W=\operatorname{Int}\Phi^{-1}((-\infty,\frac{1}{3}))$ and
$U=\{x\in X\,:\, (x,\bar y)\in W\}$. \reftext{Lemma~\ref{l:ef:1}} implies that
$C_{\bar y}$ is dense in $X$. Hence
$\bar x\in\overline{U}^{\beta X}$. For $y\in M$, let
$U_{y}\subset\beta X$ be an open neighborhood of $\bar x$ such that
$\widehat{\Phi}_{y}(U_{y})\subset(\frac{2}{3},+\infty)$.

Let $V_{-1}=Y$. By induction on ${n\in\omega}$ we construct
$y_{n}\in Y$, $U_{n}$, $V_{n}\ni\bar y$, where $U_{n}$ is an open non-empty
subset of $X$ and $V_{n}$ is open in $Y$. In this case, the following conditions
are met:
\begin{itemize}
\item[\textup{(1)}] $y_{n}\in V_{n-1}\cap M$;
\item[\textup{(2)}] $U_{n}\times V_{n}\subset W$;
\item[\textup{(3)}] $U_{n}\subset\bigcap_{i=0}^{n} U_{y_{i}}$;
\item[\textup{(4)}] $\overline{V_{n}}\subset V_{n-1}$.
\end{itemize}
On the $n$th move we choose $y_{n}\in V_{n-1}\cap M$. Let
$U'=\bigcap_{i=0}^{n} U_{y_{i}}$ and
$(x',\bar y)\in W\cap(U'\times V_{n-1})$. Take open
$U_{n}\subset X$ and $V_{n}\subset Y$, so that
\begin{equation*}
(x',\bar y) \in U_{n}\times V_{n} \subset
\overline{U_{n}\times V_{n}} \subset W \cap(U' \times V_{n-1}).
\end{equation*}
Since the space $X$ is pseudocompact, the sequence $(U_{n})_{n}$ accumulates
to some point $x_{*}\in X$. We set $g=\widehat{\Phi}^{x_{*}}$. Since (3),
we have $g(y_{n})\geq\frac{2}{3}$. Since (1) and the function $g$ is continuous,
there exists a neighborhood $S_{n}$ of the point $y_{n}$ such that
$g(S_{n})\subset(\frac{1}{2},+\infty)$ and
$S_{n}\subset V_{ n-1}$. Since (4) and the space $Y$ is pseudocompact,
 $(S_{n})_{n}$ accumulates to some point
$y_{*}\in G=\bigcap_{n} V_{n}$. The continuity of $g$ implies that
$g(y_{*})\geq\frac{1}{2}$. Condition (2)  implies
$g(y_{*})\leq\frac{1}{3}$. This is a contradiction.
\end{proof}

%
\begin{theorem}%
\label{t:ef:1}
Let $X$ and $Y$ be pseudocompact spaces and
$\Phi: X\times Y \to\mathbb R$ be a separately continuous function. Denote
\begin{align*}
\varphi_{X}&: X\to C_{p}(Y),\ \varphi_{X}(x)(y) = \Phi(x,y),
\\
\varphi_{Y}&: Y\to C_{p}(X),\ \varphi_{Y}(y)(x) = \Phi(x,y).
\end{align*}
The following conditions are equivalent:
\begin{itemize}[(10)]
\item[\textup{(1)}] there is a dense type $G_{\delta}$ subset
$D\subset Y=\overline{D}$ such that $\Phi$ is continuous at every point
$(x,y)\in X\times D$;
\item[\textup{(2)}] the function $\Phi$ is quasicontinuous;
\item[\textup{(3)}] the closure of $\varphi_{X}(X)$ in $C_{p}(Y)$ is compact;
\item[\textup{(4)}] $\varphi_{X}(X)$ is an Eberlein compactum;
\item[\textup{(5)}] $\Phi$ extends to a separately continuous
function on
$\operatorname{\beta}X\times Y$;
\item[\textup{(6)}] $\Phi$ extends to a separately continuous
function on
$\operatorname{\beta}X\times\operatorname{\beta}Y$;
\item[\textup{(7)}] $\Phi$ extends to a separately continuous
function on
$X\times\operatorname{\beta}Y$;
\item[\textup{(8)}] $\varphi_{Y}(Y)$ is an Eberlein compactum;
\item[\textup{(9)}] the closure of $\varphi_{Y}(Y)$ in $C_{p}(X)$ is compact;
\item[\textup{(10)}] there exists a dense type $G_{\delta}$ subset
$E\subset X=\overline{E}$ such that $\Phi$ is continuous at every point
$(x,y)\in E\times Y$.
\end{itemize}
\end{theorem}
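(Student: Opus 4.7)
My plan is to establish the ten equivalences by pivoting on $(2)$ and $(6)$, with Proposition~\ref{p:ef:1} (and its transpose, obtained by swapping $X$ and $Y$) as the main extension tool, and Namioka's theorem on products of compact spaces to recover joint continuity.

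First the easy implications: $(1)\Rightarrow(2)$ (and symmetrically $(10)\Rightarrow(2)$) follows from a direct separate-continuity argument, in which one shrinks a basic neighborhood $U_1\times V_1$ of $(x_0,y_0)$ via continuity of $\Phi(x_0,\cdot)$, picks $y'\in V_1\cap D$ by density, and applies joint continuity of $\Phi$ at $(x_0,y')$ to extract the required open subset. The implications $(4)\Rightarrow(3)$, $(8)\Rightarrow(9)$ and $(6)\Rightarrow(5),(7)$ are immediate. For the compactness characterizations, $(3)\Leftrightarrow(5)$ follows from standard $C_p$-duality: a separately continuous extension $\widehat\Phi:\beta X\times Y\to\mathbb R$ induces a continuous map $\hat\varphi_X:\beta X\to C_p(Y)$, $x\mapsto\widehat\Phi(x,\cdot)$, whose compact image contains $\varphi_X(X)$; conversely, if $K=\overline{\varphi_X(X)}$ is compact, then $\varphi_X:X\to K$ extends to $\hat\varphi_X:\beta X\to K$ by the universal property of $\beta X$, and $\widehat\Phi(x,y):=\hat\varphi_X(x)(y)$ is the required separately continuous extension. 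The analogous reasoning gives $(7)\Leftrightarrow(9)$, and $(2)\Rightarrow(5)$ is exactly Proposition~\ref{p:ef:1}, with its transpose giving $(2)\Rightarrow(7)$.

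The key step is $(2)\Rightarrow(6)$, in two stages. First, I would show that the extension $\widehat\Phi:\beta X\times Y\to\mathbb R$ inherits quasicontinuity from $\Phi$: at $(x_0,y_0)$ in a basic neighborhood $U_1\times V_1$ with target $O$, shrink $V_1$ via continuity of $\widehat\Phi(x_0,\cdot)$ so that $\widehat\Phi(\{x_0\}\times V_1)\subset O_1$ with $\overline{O_1}\subset O$; use continuity of $\widehat\Phi(\cdot,y_0)$ and density of $X$ in $\beta X$ to find $x_1\in X\cap U_1$ with $\widehat\Phi(x_1,y_0)\in O_1$; apply quasicontinuity of $\Phi$ at $(x_1,y_0)$ to obtain a non-empty rectangle $A\times B\subset(U_1\cap X)\times V_1$ open in $X\times Y$ with $\Phi(A\times B)\subset O_1$; finally, enlarge $A$ to an open set $A'\subset U_1$ in $\beta X$ with $A'\cap X=A$, and note that since $A$ is dense in $A'$ and $\widehat\Phi(\cdot,y)$ is continuous, $\widehat\Phi(A'\times B)\subset\overline{O_1}\subset O$. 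Second, apply the transpose of Proposition~\ref{p:ef:1} to the now-quasicontinuous $\widehat\Phi$ on $\beta X\times Y$ (compact times pseudocompact) to extend it separately continuously to $\beta X\times\beta Y$.

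Finally, $(6)$ implies the remaining conditions. For $(1),(10)$, Namioka's theorem applied to $\widetilde\Phi:\beta X\times\beta Y\to\mathbb R$ on a product of two compact spaces yields a dense $G_\delta$ set $D_0\subset\beta Y$ of joint continuity on $\beta X\times D_0$; the intersection $D=D_0\cap Y$ is dense $G_\delta$ in $Y$ since $Y$ is dense in $\beta Y$, giving $(1)$, and $(10)$ is symmetric. For $(4),(8)$, the compact image $\hat\varphi_X(\beta X)\subset C_p(\beta Y)$ is Eberlein (being compact in $C_p$ of a compact space), and the continuous injective restriction $C_p(\beta Y)\to C_p(Y)$ restricts to a homeomorphism carrying $\hat\varphi_X(\beta X)$ onto $\overline{\varphi_X(X)}$, so the latter is Eberlein. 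To close the loop from the purely compactness/extension conditions $(3),(5),(7),(9)$ back to $(2)$, I would apply a Namioka-type theorem for separately continuous functions on the product of a compact space $\beta X$ and a Baire (pseudocompact) space $Y$ to $\widehat\Phi$, obtaining joint continuity on $\beta X\times D$ for some dense $G_\delta$ $D\subset Y$, hence quasicontinuity of $\widehat\Phi$ and, by restriction, of $\Phi$. The main obstacle is the quasicontinuity-propagation argument inside $(2)\Rightarrow(6)$, which requires carefully interleaving density of $X$ in $\beta X$, separate continuity of $\widehat\Phi$, and quasicontinuity of $\Phi$ (available only on $X\times Y$) to manufacture a non-empty open rectangle in $\beta X\times Y$ on which $\widehat\Phi$ is confined to the prescribed neighborhood.
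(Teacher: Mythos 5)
Most of your proposal is sound, and your route for $(2)\Rightarrow(6)$ --- first propagating quasicontinuity from $\Phi$ to the extension $\widehat\Phi$ on $\beta X\times Y$ by a density argument, and then applying the transpose of Proposition~\ref{p:ef:1} to the pair of pseudocompact spaces $\beta X$ and $Y$ --- is a genuinely different and attractive alternative to the paper, which instead obtains $(5)\Leftrightarrow(6)$ by citing the Eberlein--Grothendieck machinery of \cite{Reznichenko1994} and \cite{ReznichenkoUspenskij1998} for the whole block $(3)$--$(9)$. However, your closing step contains a genuine gap. To get from the extension/compactness conditions $(3),(5),(7),(9)$ back to $(2)$ you invoke ``a Namioka-type theorem for separately continuous functions on the product of a compact space $\beta X$ and a Baire (pseudocompact) space $Y$.'' No such off-the-shelf theorem exists: by Talagrand's example there is a separately continuous function on the product of an $\alpha$-favourable (hence Baire) space and a compact space with no points of joint continuity, so Baireness of $Y$ does not suffice, and the classical positive results require either the compact factor to be metrizable, Corson or Valdivia, or the Baire factor to be strongly countably complete --- neither of which is available here. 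Worse, the statement you want is precisely the special case of the implication $(5)\Rightarrow(1)$ of the present theorem in which one factor is already compact: every separately continuous $\Phi$ on $K\times Y$ with $K$ compact trivially satisfies $(5)$, so your cited ``theorem'' is equivalent (via condition $(9)$) to the assertion that every compact space is weakly pc-Grothendieck, which is the hardest content of the circle of results being proved. Invoking it as known is therefore circular.

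The repair is to route this direction through $(6)$, as the paper does: cite \cite[Assertion 1.4]{Reznichenko1994} and \cite[Proposition 3.1]{ReznichenkoUspenskij1998} for $(5)\Leftrightarrow(6)$ (the substance being that a compact subset of $C_p(Y)$ with $Y$ pseudocompact is an Eberlein compactum on which the topologies of pointwise convergence on $Y$ and on $\beta Y$ coincide), and only then apply Namioka's theorem to the genuinely compact product $\beta X\times\beta Y$ to recover $(1)$, whence $(2)$. Two smaller points. In your $(6)\Rightarrow(1)$ step, density of $D_0\cap Y$ in $Y$ does not follow from mere density of $Y$ in $\beta Y$ (a dense $G_\delta$ set can miss a dense subspace entirely, as the irrationals miss $\mathbb Q$); what is needed is that $Y$ is $G_\delta$-dense in $\beta Y$, and this is exactly where pseudocompactness enters, as in the paper's proof. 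And for $(4)$ you only show that $\overline{\varphi_X(X)}$ is an Eberlein compactum; to conclude that $\varphi_X(X)$ itself is one, you must add that the pseudocompact subspace $\varphi_X(X)$ of an Eberlein compactum is closed (the Preiss--Simon property of Eberlein compacta).
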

\begin{proof}
The equivalence of conditions from (3) to (9) follows from
\cite[Assertion 1.4]{Reznichenko1994} and
\cite[Proposition
3.1]{ReznichenkoUspenskij1998}. The implications 
$(1)\Rightarrow(3) \Leftarrow(10)$ are obvious. The implication
$(2)\Rightarrow(5)$ is \reftext{Proposition~\ref{p:ef:1}}.

Let us prove $(6)\Rightarrow(1)$. Let
$\widehat{\Phi}: \operatorname{\beta}X\times\operatorname{\beta}Y
\to\mathbb R$ be a separately continuous extension of the function
$\Phi$. The pair of compact spaces $\operatorname{\beta}X$ and
$\operatorname{\beta}Y$ satisfy the Namioka property
${\mathcal{N}}( \operatorname{\beta}X , \operatorname{\beta}Y )$
\cite{namioka1974}. Hence there is a dense type $G_{\delta}$ subset
$D'\subset\beta Y=\overline{D'}$ such that $\widehat{\Phi}$ is continuous
at every point $(x,y)\in\beta X\times D$. Since $Y$ is pseudocompact,
$D=Y\cap D'$ is dense in $Y$ and of type $G_{\delta}$ in $Y$. Then
$\Phi$ is continuous at every point $(x,y)\in X\times D$.

The implication $(6)\Rightarrow(10)$ follows from the implication
$(6)\Rightarrow(1)$.
\end{proof}

A space $X$ is called \textit{pc-Grothendieck} (\textit
{pe-Grothendieck}) if
any pseudocompact subspace of $C_{p}(X)$ is an (Eberlein) compact set. A
space $X$ is called \textit{weakly pc-Grothendieck} if any
pseudocompact subspace
of $C_{p}(X)$ has a compact closure in $C_{p}(X)$)
\cite{Arhangelskii1997}.

%
\begin{theorem}%
\label{t:ef:2}
Let $X$ be a pseudocompact space and let $Y\subset C_{p}(X)$ be pseudocompact.
The following conditions are equivalent:
\begin{itemize}
\item[\textup{(1)}] $\overline{Y}$ is compact;
\item[\textup{(2)}] $Y$ is compact;
\item[\textup{(3)}] $Y$ is an Eberlein compactum;
\item[\textup{(4)}] $Y$ is weakly pc-Grothendieck;
\item[\textup{(5)}] $\{f\in Y\,:\,$ the restrictions to $Y$ of the
topologies of
pointwise and uniform convergence coincide at $f\}$ dense in $Y$;
\item[\textup{(6)}] $\{f\in Y\,:\, \chi(f,Y)\leq\omega\}$ is dense in $Y$;
\item[\textup{(7)}] $\{f\in Y\,:\, \pi\chi(f,Y)\leq\omega\}$
is dense in $Y$.
\end{itemize}
\end{theorem}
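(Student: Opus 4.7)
The plan is to reduce to Theorem~\ref{t:ef:1} by applying it to the separately continuous evaluation
\[
\Phi: X\times Y\to\mathbb R,\qquad \Phi(x,y)=y(x).
\]
(Separate continuity: each $y\in Y\subset C_p(X)$ is continuous on $X$, and the topology on $Y$ is pointwise convergence.) Under this $\Phi$ the map $\varphi_{Y}$ of Theorem~\ref{t:ef:1} is simply the inclusion $Y\hookrightarrow C_p(X)$, while $\varphi_{X}: X\to C_p(Y)$ sends $x$ to evaluation at $x$, which is continuous on $Y$. Hence conditions (9) and (8) of Theorem~\ref{t:ef:1} translate verbatim into conditions (1) and (3) of the present theorem, and $\varphi_{X}(X)$ is automatically a pseudocompact subspace of $C_p(Y)$.

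The block (1)$\Leftrightarrow$(2)$\Leftrightarrow$(3) is then immediate: (1)$\Leftrightarrow$(3) comes from Theorem~\ref{t:ef:1}, and (3)$\Rightarrow$(2)$\Rightarrow$(1) holds because every Eberlein compactum is compact. For (3)$\Leftrightarrow$(4), the forward direction uses Grothendieck's classical theorem (any compact, in fact countably compact, space is weakly pc-Grothendieck); the reverse direction applies the weakly pc-Grothendieck property to the pseudocompact set $\varphi_{X}(X)\subset C_p(Y)$ to produce $\overline{\varphi_{X}(X)}$ compact, which is condition (3) of Theorem~\ref{t:ef:1} and hence (3) here.

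The chain (3)$\Rightarrow$(5)$\Rightarrow$(6)$\Rightarrow$(7) I would obtain as follows. For (3)$\Rightarrow$(5) I would invoke the classical density theorem for Eberlein compacta in $C_p(X)$: in such a compactum the set of points at which the topologies of pointwise and uniform convergence coincide is dense. Since $X$ is pseudocompact, every $y\in Y$ is bounded, so the uniform topology on $Y$ is metrizable; coincidence of the two topologies at $f$ therefore gives $\chi(f,Y)\le\omega$, which yields (5)$\Rightarrow$(6). Finally (6)$\Rightarrow$(7) is tautological since $\pi\chi\le\chi$.

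The cycle is closed by the hardest implication, (7)$\Rightarrow$(1), which is the main obstacle. The plan is to verify that $\Phi$ is quasicontinuous and then apply Theorem~\ref{t:ef:1} (quasicontinuity of $\Phi$ gives (9) there, which is (1) here). Given $(x_0,y_0)\in X\times Y$, $\varepsilon>0$, and a basic neighborhood $U\times V$ of $(x_0,y_0)$, I would first refine $V$ so that $V\subset\{y\in Y:|y(x_0)-y_0(x_0)|<\varepsilon/3\}$, then use (7) to pick $y_1\in V$ with $\pi\chi(y_1,Y)\le\omega$, shrink $U$ to a non-empty open $U'$ on which $|y_1(x)-y_1(x_0)|<\varepsilon/3$, and finally combine a countable $\pi$-base at $y_1$ with the pseudocompactness of both $X$ and $Y$---mimicking the inductive construction used in the proof of Proposition~\ref{p:ef:1}, where sequences indexed by a countable collection of open sets accumulate to sufficiently generic points---to extract non-empty open $U''\subset U'$ and $V''\subset V$ near $y_1$ on which $|y(x)-y_1(x)|<\varepsilon/3$. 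A triangle-inequality estimate then gives $|y(x)-y_0(x_0)|<\varepsilon$ on $U''\times V''$, establishing quasicontinuity.
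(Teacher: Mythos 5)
Your overall architecture coincides with the paper's: the same evaluation map $\Phi(x,y)=y(x)$, the same identification of $\varphi_Y$ with the inclusion $Y\hookrightarrow C_p(X)$ so that (1), (3) become conditions (9), (8) of Theorem~\ref{t:ef:1}, the same use of the Asanov--Velichko/Grothendieck theorem for (2)$\Rightarrow$(4) and of the weak pc-Grothendieck property of $Y$ applied to the pseudocompact set $\varphi_X(X)\subset C_p(Y)$ for the converse, the same Namioka density theorem for (3)$\Rightarrow$(5), and the same closing move for (7): prove that $\Phi$ is quasicontinuous and feed this back into Theorem~\ref{t:ef:1}. Two remarks on precision. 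First, in (3)$\Rightarrow$(5) you quote a ``density theorem for Eberlein compacta in $C_p(X)$'' for arbitrary Tychonoff $X$; Namioka's theorem is stated for compact $X$, and the paper inserts Haydon's theorem (identifying the restrictions to $Y$ of the pointwise topologies of $C_p(X)$ and $C_p(\beta X)$) precisely to make that reduction legitimate. This is a small but real missing step.

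The genuine gap is in (7)$\Rightarrow$(1). The crucial extraction --- from a countable $\pi$-base $(W_n)$ at $y_1$ produce non-empty open $U''\subset U'$ and $V''\subset V$ with $|y(x)-y_1(x)|<\varepsilon/3$ on $U''\times V''$ --- is exactly the non-trivial content here, and your proposed justification, ``mimicking the inductive construction used in the proof of Proposition~\ref{p:ef:1},'' is circular: that construction takes quasicontinuity of $\Psi$ as a \emph{hypothesis} (it is invoked at the step producing $U_n\times W_n$ with $\Psi>2\varepsilon$ on it), whereas quasicontinuity is precisely what you are trying to establish. The paper avoids this by citing Corollary~\ref{c:grot-q:1}, i.e.\ the Dole\v{z}al--Moors theorem for $\widetilde W$-spaces together with the fact that countable $\pi$-character gives $\widetilde W$-points and Proposition~\ref{p:grot-q:8} for passing from a dense set of such points to all of $Y$. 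Your step can in fact be repaired without the game machinery: since $\Psi(x,\cdot)=|{\cdot}(x)-y_1(x)|$ is continuous and vanishes at $y_1$, every $x\in U'$ lies in some $B_n=\{x\in U':\Psi(x,y)\le\varepsilon/4\ \forall y\in W_n\}$ (a neighborhood of $y_1$ contains some $W_n$), each $B_n$ is closed, and the Baire property of the pseudocompact space $X$ yields an $n$ with $U''=\operatorname{Int}B_n\neq\varnothing$, $V''=W_n$. But as written, the appeal to the Proposition~\ref{p:ef:1} template does not work, and this is the implication the whole cycle depends on.
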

\begin{proof}
Let $\Phi: X\times T\to\mathbb R,\ (x,f)\mapsto f(x)$ and let
$\varphi_{Y}: Y\to C_{p}(X),\ \varphi_{Y}(y)(x) = \Phi(x,y)$. Then
$\varphi_{Y}$ is the identity mapping of $Y$ onto $Y$.

$(1)\Leftrightarrow(2)\Leftrightarrow(3)$. These implications follow
from \reftext{Theorem~\ref{t:ef:1}} (8) and (9).

$(2)\Rightarrow(4)$. This implication follows from the Asanov--Velichko
theorem \cite{AsanovVelichko1981} (see also
\cite[III.4.1. Theorem]{Arkhangelskii1992cpbook}).

$(4)\Rightarrow(1)$. Since $Y$ is pc-Grothendieck,
$\varphi_{X}(X)$ is compact. \reftext{Theorem~\ref{t:ef:1}} (9) implies that
$\overline{Y}$ is compact.

$(2)\Rightarrow(5)$. We identify $C(X)$ and
$C(\operatorname{\beta}X)$ in a natural way. Then by the Haydon theorem
\cite{Haydon1972} the restrictions to $Y$ of the topologies of
$C_{p}(X)$ and $C_{p}(\operatorname{\beta}X)$ coincide. Hence it suffices
to prove the implication for compact $X$, and for compact $X$ this implication
is exactly the same as the Namioka theorem
\cite[Theorem 2.31]{namioka1974}.

Obviously $(5)\Rightarrow(6)\Rightarrow(7)$.

$(7)\Rightarrow(2)$. It follows from \reftext{Corollary~\ref{c:grot-q:1}} that
the function $\Phi$ is quasi-continuous. \reftext{Theorem~\ref{t:ef:1}} (8) implies
that $Y$ is compact.
\end{proof}

\section{Quasi-continuous functions}
\label{sec-grot-q}

Let us define topological games $G_{g}(y_{*},Y)$ and
$G_{\tilde g}(y_{*},Y)$ for the space $Y$ and $y\in Y$
\cite{Gruenhage1976,DolezalMoors2017}. Players $\alpha$ and
$\beta$ are playing. On the $n$th move, player $\alpha$ chooses
\begin{itemize}
\item an open neighborhood $W_{n}\subset Y$ of point $y_{*}$ in the game
$G_{g}(y_{*},Y)$;
\item an open non-empty set $W_{n}\subset Y$ in the game
$G_{\tilde g}(y_{*},Y)$.
\end{itemize}
Player $\beta$ chooses $y_{n}\in W_{n}$. Player $\alpha$ wins if
$y_{*}\in\overline{\{y_{n}\,:\,{n\in\omega}\}}$.

A point $x\in X$ is called a \textit{$W$-point} (\textit{$\widetilde
{W}$-point})
if 
the player $\alpha$ has a winning strategy in the game
$G_{g}( y_{*},Y )$ ($G_{\tilde g}(y_{*},Y)$). A space $X$ is called a
$W$-space ($\widetilde{W}$-space) if every point in $Y$ is a $W$-point
($\widetilde{W}$-point) \cite{Gruenhage1976,DolezalMoors2017}.

%
\begin{proposition}[{\cite[Theorem 11]{DolezalMoors2017}}]%
\label{p:grot-q:7}
Suppose that $X$ and $Y$ are topological spaces and
$\Phi: X \times Y \to\mathbb R$ is a separately continuous function.
If $X$ is a Baire space and $Y$ is a $\widetilde{W}$-space, then
$\Phi$ is quasi-continuous.
\end{proposition}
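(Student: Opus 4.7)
The plan is a proof by contradiction. Suppose $\Phi$ fails quasi-continuity at some $(x_*, y_*) \in X \times Y$; then there exist $\varepsilon > 0$ and open neighborhoods $U \ni x_*$, $V \ni y_*$ such that the set
\begin{equation*}
H = \{(x,y)\in U\times V : |\Phi(x,y) - \Phi(x_*,y_*)| > 3\varepsilon\}
\end{equation*}
is dense in $U\times V$. Normalize to $\Phi(x_*,y_*) = 0$ and shrink $U$, using continuity of $\Phi(\cdot,y_*)$, so that $|\Phi(x,y_*)| < \varepsilon$ for all $x\in U$. The goal is to produce a $\beta$-play $(y_n)_{n\in\omega}$ against $\alpha$'s winning strategy $\sigma$ in $G_{\tilde g}(y_*, Y)$, all of whose terms lie in $V$, together with a single $\bar x \in U$ satisfying $|\Phi(\bar x, y_n)| > 2\varepsilon$ for every $n$. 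Once these are in hand, continuity of $\Phi(\bar x,\cdot)$ combined with $y_* \in \overline{\{y_n\}}$ (guaranteed since $\sigma$ wins) forces $\Phi(\bar x, y_*)$ to be a cluster value of $\{\Phi(\bar x, y_n)\}$, whence $|\Phi(\bar x, y_*)| \geq 2\varepsilon$, contradicting $\bar x\in U$.

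Constructing such a $\bar x$ and play is the heart of the argument. At stage $n$, compute $W_n = \sigma(y_0,\ldots,y_{n-1})$ and consider
\begin{equation*}
F_n = \{x \in U : \exists y \in W_n\cap V \text{ with } |\Phi(x,y)| > 3\varepsilon\}.
\end{equation*}
This set is open in $U$ by continuity of $\Phi(\cdot, y)$, and dense in $U$ by density of $H$ whenever $W_n\cap V \neq\emptyset$. Choosing $y_n\in W_n\cap V$ at each stage and invoking the Baire property of $X$, the intersection $\bigcap_n F_n$ is non-empty (in fact dense in $U$); a point $\bar x$ in it provides, for each $n$, a witness $\tilde y_n\in W_n\cap V$ with $|\Phi(\bar x,\tilde y_n)| > 3\varepsilon$.

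I expect two main obstacles. First, the strategy $\sigma$ could return $W_n$ disjoint from $V$, rendering $F_n$ empty; this will be navigated by using that $\sigma$ is winning, so $y_*\in\overline{\{y_n\}}$ for every $\beta$-play, forcing infinitely many $y_n$ to fall inside $V$, and by arguing that $\beta$'s play can be driven to remain inside $V$ throughout (this may implicitly exploit that the $\widetilde W$-property passes to the open neighborhood $V$ of $y_*$, yielding a modified winning strategy whose moves all lie in $V$). Second and more delicate, the extracted witnesses $\tilde y_n$ depend on $\bar x$ and are not themselves the $\beta$-moves of a single play against $\sigma$, so concluding $y_*\in\overline{\{\tilde y_n\}}$ is not immediate. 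To align the two constructions I would proceed by a simultaneous diagonal induction, building the play $(y_n)$ in lockstep with a shrinking family of open subsets of $X$ on which $|\Phi(\cdot,y_n)|>2\varepsilon$ holds, and choosing $\bar x$ via a Baire-category argument so that the values $\Phi(\bar x, y_n)$ for the \emph{actual} play (not just witnesses) remain bounded away from zero. This coordination between the Baire-category argument on $X$ and the game-strategy argument on $Y$ is where I anticipate the crux of the proof to lie.
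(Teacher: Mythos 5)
The paper does not actually prove this proposition; it quotes it from Dole\v zal--Moors, so the comparison below is with the standard game-theoretic proof of that result. Your reduction is the right one: negate quasi-continuity at $(x_*,y_*)$, normalize $\Phi(x_*,y_*)=0$, shrink $U$ so that $|\Phi(\cdot,y_*)|<\varepsilon$ on $U$, and aim to produce a run $(y_n)$ of the game consistent with $\alpha$'s winning strategy together with a single $\bar x\in U$ satisfying $|\Phi(\bar x,y_n)|\geq 2\varepsilon$ for all $n$; then $\{y: |\Phi(\bar x,y)|<2\varepsilon\}$ is a neighborhood of $y_*$ missing every $y_n$, contradicting $y_*\in\overline{\{y_n\}}$. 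Your first worry ($\sigma$ returning $W_n$ disjoint from $V$) is real but harmless: the $\widetilde W$-property does localize to the open neighborhood $V$ (simulate a play in $Y$, answering any $W_n$ disjoint from $V$ with an arbitrary point of $W_n$ yourself; this stalling must terminate, since otherwise the whole play lies in $Y\setminus V$ yet clusters at $y_*\in V$), so you may assume all of $\alpha$'s moves are nonempty open subsets of $V$.

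The genuine gap is exactly the ``coordination'' step you flag at the end, and the fix you sketch does not close it. You cannot choose $\bar x$ first by the plain Baire category theorem, because the dense open sets $F_n$ are indexed by the moves $W_n=\sigma(y_0,\dots,y_{n-1})$, which depend on the play, which in turn depends on $\bar x$: the family of dense open sets you would need to intersect is not countable and fixed in advance (this is precisely what makes the countable-$\pi$-character case easy and the game case hard). Conversely, building the play first ``in lockstep with a shrinking family of open subsets of $X$'' produces a decreasing chain of nonempty open sets $U\supset U_0\supset U_1\supset\cdots$ with $|\Phi(\cdot,y_n)|>2\varepsilon$ on $U_{n+1}$, but a decreasing chain of nonempty open sets in a Baire space can have empty intersection (e.g. $(n,+\infty)$ in $\mathbb R$), so no $\bar x$ need exist. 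The missing ingredient is the Oxtoby--Krom--Saint-Raymond theorem: $X$ is Baire iff the player seeking empty intersection has no winning strategy in the Banach--Mazur game. One encodes the construction as a strategy for that player on $U$: at each stage, given the current open set $B$, compute $W_n=\sigma(y_0,\dots,y_{n-1})$, pick $(x',y')\in B\times W_n$ with $|\Phi(x',y')|\geq 3\varepsilon$ (density of $H$), record $y_n=y'$, and answer with $\{x\in B: |\Phi(x,y_n)|>2\varepsilon\}$. Since this strategy is not winning, some run against it has nonempty intersection, and any $\bar x$ in that intersection, together with the recorded play $(y_n)$, yields the contradiction. Without this (or an equivalent device), the argument does not go through, so as written the proposal is incomplete at its central step.
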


%
\begin{proposition}%
\label{p:grot-q:8}
Suppose that $X$ and $Y$ are topological spaces,
$\Phi: X \times Y \to\mathbb R$ is a separately continuous function,
$M\subset Y\subset\overline{M}$, and a function
\begin{equation*}
\left.\Phi\right|_{X\times M}: X\times M \to\mathbb R%
\end{equation*}
is quasi-continuous. Then $\Phi$ is quasi-continuous.
\end{proposition}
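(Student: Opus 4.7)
The plan is to verify quasi-continuity of $\Phi$ directly at an arbitrary point $(x_0,y_0)\in X\times Y$ by transferring the problem to $X\times M$, where we already have the hypothesis available, and then passing back to $X\times Y$ using separate continuity together with the density of $M$.

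Fix a neighborhood $O$ of $\Phi(x_0,y_0)$ and a neighborhood $W$ of $(x_0,y_0)$; shrinking, we may assume $W=W_X\times W_Y$ with $W_X$ open in $X$ and $W_Y$ open in $Y$. Choose a smaller open neighborhood $O_0$ of $\Phi(x_0,y_0)$ with $\overline{O_0}\subset O$. Using continuity of $\Phi(x_0,\cdot)$, pick an open $W_Y'\subset W_Y$ containing $y_0$ with $\Phi(\{x_0\}\times W_Y')\subset O_0$. Since $M$ is dense in $Y$, there exists $y_1\in M\cap W_Y'$, and then $\Phi(x_0,y_1)\in O_0$.

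Now apply the quasi-continuity of $\Phi|_{X\times M}$ at the point $(x_0,y_1)\in X\times M$: with target neighborhood $O_0$ and source neighborhood $W_X\times(W_Y'\cap M)$ we obtain a non-empty relatively open set $U'\subset W_X\times(W_Y'\cap M)$ with $\Phi(U')\subset O_0$. Passing to a smaller basic relatively open rectangle, we may assume $U'=V_X\times(V_Y\cap M)$ with $V_X\subset W_X$ open in $X$, $V_Y\subset W_Y'$ open in $Y$, $V_X\ne\varnothing$, and $V_Y\cap M\ne\varnothing$ (the latter forces $V_Y\ne\varnothing$).

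It remains to show $\Phi(V_X\times V_Y)\subset O$. Fix $x\in V_X$. The function $\Phi(x,\cdot)\colon Y\to\mathbb{R}$ is continuous and takes values in $O_0$ on $V_Y\cap M$. Because $M$ is dense in $Y$, the set $V_Y\cap M$ is dense in $V_Y$, so by continuity $\Phi(x,y)\in\overline{O_0}\subset O$ for every $y\in V_Y$. Hence $V_X\times V_Y$ is a non-empty open subset of $W$ on which $\Phi$ takes values in $O$, which is precisely the quasi-continuity condition. The only mildly delicate step is the extraction of the product rectangle $V_X\times V_Y$ from the relatively open set $U'$; everything else is a routine combination of continuity in the second variable with density of $M$.
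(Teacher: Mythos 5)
Your proof is correct and follows essentially the same route as the paper's: pass to a nearby point of $X\times M$ via separate continuity and density, apply quasi-continuity of the restriction with a shrunken target neighborhood whose closure lies in $O$, and recover an open rectangle in $X\times Y$ by continuity in the second variable. Your write-up merely makes explicit two steps the paper leaves implicit (extracting the rectangle from the relatively open set, and the final closure argument), which is fine.
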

\begin{proof}
Let $(x',y')\in X \times Y$, and let $W=U\times V$ be a neighborhood of
$(x',y')$ and $O\subset\mathbb R$ be a neighborhood of
$\Phi(x',y')$. Let $S$ be a neighborhood of the point
$\Phi(x',y')$ such that $\overline{S}\subset O$. Since $\Phi$ is separately
continuous, we have $\Phi(x',y'')\in S$ for some $y''\in M\cap V$. Since
$\Psi=\left.\Phi\right|_{X\times M}$ is quasi-continuous, we have
\begin{equation*}
\Psi(U'\times(M\cap V'))\subset S
\end{equation*}
for some non-empty open $U'\times V'\subset U\times V$. Then
$\Phi(U'\times V')\subset\overline{S}\subset O$.
\end{proof}

From  \reftext{Propositions~\ref{p:grot-q:7} and \ref{p:grot-q:8}} the following
statement follows.

%
\begin{proposition}%
\label{p:grot-q:9}
Suppose that $X$ and $Y$ are topological spaces,
$Z\subset Y=\overline{Z}$ and $\Phi: X \times Y \to\mathbb R$ is a separately
continuous function. If $X$ is a Baire space and $Z$ is a
$\widetilde{W}$-space, then $\Phi$ is quasi-continuous.
\end{proposition}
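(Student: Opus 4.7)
The plan is to combine the two preceding propositions in the obvious way, using the dense $\widetilde{W}$-subspace $Z$ as the intermediate object. First I would restrict $\Phi$ to $X \times Z$. Since separate continuity is preserved under restriction in the second coordinate, $\Phi|_{X \times Z}$ is separately continuous. By hypothesis $X$ is a Baire space and $Z$ is a $\widetilde{W}$-space, so \reftext{Proposition~\ref{p:grot-q:7}} applies directly and yields that $\Phi|_{X \times Z}$ is quasi-continuous.

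Next I would invoke \reftext{Proposition~\ref{p:grot-q:8}} with the choice $M = Z$. The required hypotheses are precisely at hand: $\Phi$ is separately continuous on $X \times Y$, the subset $Z$ is dense in $Y$ (that is what $Y = \overline{Z}$ says), and $\Phi|_{X \times Z}$ is quasi-continuous by the first step. The conclusion of \reftext{Proposition~\ref{p:grot-q:8}} is exactly that $\Phi$ itself is quasi-continuous.

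There is essentially no obstacle here; the lemma is a one-line chaining of the two preceding results, and all verifications are routine checks that the hypotheses transfer correctly (separate continuity restricts, $Z$ being dense is the same as $Y = \overline{Z}$). If I wanted to be a bit more careful I would note explicitly that in \reftext{Proposition~\ref{p:grot-q:8}} the role of the ambient spaces matches: one takes the pair $(X, Y)$ there to be our $(X, Y)$ and the dense subset $M$ there to be our $Z$, so no re-labelling subtlety arises.
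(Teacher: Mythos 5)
Your proposal is correct and is exactly the argument the paper intends: the text introduces Proposition~\ref{p:grot-q:9} with the remark that it ``follows from Propositions~\ref{p:grot-q:7} and \ref{p:grot-q:8},'' i.e.\ precisely the chaining you describe (apply Proposition~\ref{p:grot-q:7} to $\Phi|_{X\times Z}$, then Proposition~\ref{p:grot-q:8} with $M=Z$). No further comment is needed.
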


A space with  countable character is a $W$-space
\cite{Gruenhage1976} and a space with  countable $\pi$-character is a
$\widetilde{W}$-space \cite[Proposition 33]{rezn2022-1}.

%
\begin{cor}%
\label{c:grot-q:1}
Suppose that $X$ and $Y$ are topological spaces and
$\Phi: X \times Y \to\mathbb R$ is a separately continuous function.
If $X$ is a Baire space and
$\{y\in Y\,:\, \pi\chi(y,Y)\leq\omega\}$ is dense in $Y$, then
$\Phi$ is quasi-continuous.
\end{cor}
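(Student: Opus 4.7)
The plan is to reduce to \reftext{Proposition~\ref{p:grot-q:9}} by exhibiting a dense $\widetilde{W}$-subspace of $Y$. The natural candidate is
\[
Z = \{y \in Y : \pi\chi(y,Y) \leq \omega\},
\]
which is dense in $Y$ by hypothesis. Once $Z$ is shown to be a $\widetilde{W}$-space in its subspace topology, \reftext{Proposition~\ref{p:grot-q:9}} applies directly to $Z \subset Y = \overline{Z}$, the Baire space $X$, and the function $\Phi$, giving the desired quasi-continuity.

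The only step with content is verifying that $Z$ is a $\widetilde{W}$-space. For this I appeal to the remark stated immediately before the corollary: a space of countable $\pi$-character is a $\widetilde{W}$-space. Hence it suffices to show that every $y \in Z$ has countable $\pi$-character in the subspace $Z$. Given $y \in Z$, I pick a countable $\pi$-base $\{U_n\}_{n\in\omega}$ of $Y$ at $y$ and claim that $\{U_n \cap Z\}_{n\in\omega}$ is a countable $\pi$-base of $Z$ at $y$: each $U_n \cap Z$ is non-empty by density of $Z$ in $Y$, and an arbitrary $Z$-neighborhood of $y$ has the form $V \cap Z$ for some $Y$-neighborhood $V$ of $y$, hence contains $U_n \cap Z$ for any $n$ with $U_n \subset V$.

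I do not foresee a real obstacle. The only mild subtlety is that the hypothesis of the corollary measures $\pi$-character in $Y$, whereas the remark about $\widetilde{W}$-spaces needs $\pi$-character measured in $Z$; the transfer is the routine argument above. Everything else is a direct invocation of \reftext{Proposition~\ref{p:grot-q:9}} together with the cited fact about $\widetilde{W}$-spaces.
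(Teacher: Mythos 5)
Your proposal is correct and follows exactly the route the paper intends: the corollary is stated without explicit proof precisely because it is the combination of Proposition~\ref{p:grot-q:9} with the preceding remark that spaces of countable $\pi$-character are $\widetilde{W}$-spaces, applied to the dense set $Z=\{y\in Y: \pi\chi(y,Y)\leq\omega\}$. Your extra step transferring countable $\pi$-character from $Y$ to the dense subspace $Z$ is a detail the paper leaves implicit, and you handle it correctly.
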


\section{Pseudocompact pc-Grothendieck spaces}
\label{sec-grot}

We call a space $X$ a \textit{pf-space} if every pseudocompact
$Y\subset X$ has points with a countable base of neighborhoods. A space
$X$ is called a \textit{pf-Grothendieck} space if $C_{p}(X)$ is a pf-space.

%
\begin{proposition}%
\label{p:grot:0}
Let $X$ be a pf-space. Then for every pseudocompact $Y\subset X$, the set
$\{y\in Y\,:\, \pi\chi(y,Y)\leq\omega\}$ is dense in $Y$.
\end{proposition}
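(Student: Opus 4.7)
The plan is to show that every non-empty open $U\subset Y$ contains a point of countable $\pi$-character in $Y$. Since $Y$ is Tikhonoff, hence regular, I would first pick a non-empty open $W\subset Y$ with $\overline{W}^{Y}\subset U$, and set $Z=\overline{W}^{Y}$.

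The key observation is that $Z$, being the closure of an open set in the pseudocompact Tikhonoff space $Y$, is itself pseudocompact. This is a standard fact: pseudocompactness coincides with feeble compactness for Tikhonoff spaces, and feeble compactness passes to regular closed subsets via a routine local-finiteness argument (an infinite locally finite family of non-empty opens in $Z$ would pull back through intersection with $W$ to an infinite locally finite family of non-empty opens in $Y$, contradicting feeble compactness of $Y$). Hence $Z$ is a pseudocompact subspace of $X$, and the pf-hypothesis on $X$ supplies a point $z\in Z$ with a countable neighborhood base $\{V_{n}\}_{n\in\omega}$ in $Z$; write $V_{n}=O_{n}\cap Z$ with $O_{n}$ open in $Y$.

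To upgrade this to a countable $\pi$-base at $z$ in $Y$, set $P_{n}=O_{n}\cap W$. Each $P_{n}$ is open in $Y$, and non-empty because $z\in\overline{W}^{Y}$ forces the neighbourhood $O_{n}$ of $z$ to meet $W$. Given any open neighborhood $O$ of $z$ in $Y$, choose $n$ with $V_{n}\subset O\cap Z$; then, using $W\subset Z$,
\begin{equation*}
P_{n}=O_{n}\cap W=O_{n}\cap Z\cap W\subset O\cap Z\cap W\subset O.
\end{equation*}
Thus $\{P_{n}\}_{n\in\omega}$ is a $\pi$-base at $z$ in $Y$, so $\pi\chi(z,Y)\leq\omega$, and $z\in Z\subset U$, as required.

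The only non-trivial step, and the one I would flag as the main obstacle, is the pseudocompactness of the regular closed set $Z$; the rest is straightforward point-set manipulation, with the density $W\subset Z=\overline{W}^{Y}$ doing all the work to convert a genuine base in $Z$ into a $\pi$-base in $Y$.
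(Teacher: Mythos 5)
Your argument is correct and follows essentially the same route as the paper's: pass to the closure $Z$ of a small open set (a pseudocompact regular closed subset of $Y$), apply the pf-property of $X$ to get a point of $Z$ with a countable base there, and use the density of $W$ in $Z$ to turn that base into a countable local $\pi$-base in $Y$. The paper states the pseudocompactness of $Z$ and the final $\pi$-base verification without detail; your write-up just makes those two steps explicit.
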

\begin{proof}
Let $U\subset Y$ be a non-empty set open in $Y$. There is a non-empty set
$V\subset U$ open in $Y$ such that $V\subset S \subset U$, where
$S=\overline{V}\cap Y$. The subspace $S$ is pseudocompact. Let
$(U_{n})_{n}$ be a countable base of some point $s\in S$ in the space
$S$. Let $V_{n}$ be the interior of the set $U_{n}$ in the space $Y$. Then
$(V_{n})_{n}$ is a countable $\pi$-base of the point $s\in Y$ in the space
$Y$.
\end{proof}

%
\begin{theorem}%
\label{t:grot:1}
Let $X$ be a pseudocompact space. The following conditions are equivalent:
\begin{enumerate}[(6)]
\item[(1)]$X$ is weakly pc-Grothendieck;
\item[(2)]
$X$ is  pc-Grothendieck;
\item[(3)]$X$ is pe-Grothendieck;
\item[(4)]$X$ is pf-Grothendieck;
\item[(5)] for any pseudocompact $Y\subset C_{p}(X)$, one of the equivalent
conditions of \reftext{Theorem~\ref{t:ef:2}} is satisfied:
\begin{enumerate}[(g)]
\item[(a)]$\overline{Y}$ is compact;
\item[(b)]$Y$ is compact;
\item[(c)]$Y$ is an Eberlein compactum;
\item[(d)]$Y$ is weakly pc-Grothendieck;
\item[(e)]$\{f\in Y\,:\,$ the restrictions to $Y$ of the topologies of pointwise
and uniform convergence coincide at $f\}$ is dense in $Y$;
\item[(f)]$\{f\in Y\,:\, \chi(f,Y)\leq\omega\}$ is dense in $Y$;
\item[(g)]$\{f\in Y\,:\, \pi\chi(f,Y)\leq\omega\}$ is dense in $Y$;
\end{enumerate}
\item[(6)] for any pseudocompact space $Y$ one of the following equivalent conditions
is satisfied:
\begin{enumerate}[(d)]
\item[(a)]$X$ and $Y$ form a Grothendieck pair;
\item[(b)]$Y$ and $X$ form a Grothendieck pair;
\item[(c)]$X$ and $Y$ satisfy the Namioka property;
\item[(d)]$Y$ and $X$ satisfy the Namioka property.
\end{enumerate}
\end{enumerate}
\end{theorem}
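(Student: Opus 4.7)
The plan is to reduce everything to the equivalences already packaged in Theorem~\ref{t:ef:2} (for a single pseudocompact $Y\subset C_{p}(X)$) and Theorem~\ref{t:ef:1} (for a single separately continuous function on a product of two pseudocompact spaces), supplemented by Proposition~\ref{p:grot:0} at one point. Because both (5) and (6) of the present theorem are themselves packages of previously-established equivalent subconditions, most of the work will be unpacking definitions and matching up quantifiers rather than proving anything genuinely new.

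First I will dispatch $(1)\Leftrightarrow(2)\Leftrightarrow(3)\Leftrightarrow(5)$. Conditions (1), (2), (3) assert, respectively, that every pseudocompact $Y\subset C_{p}(X)$ satisfies conditions (1), (2), (3) of Theorem~\ref{t:ef:2}, while (5) asserts that every such $Y$ satisfies one (hence all) of (1)--(7) of Theorem~\ref{t:ef:2}. Since those seven are equivalent for each fixed $Y$, the four conditions of the present theorem collapse to a single assertion.

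Next, for $(2)\Leftrightarrow(4)$: if $X$ is pf-Grothendieck then $C_{p}(X)$ is a pf-space, and Proposition~\ref{p:grot:0} applied inside $C_{p}(X)$ yields density of points of countable $\pi$-character in every pseudocompact $Y\subset C_{p}(X)$, which is condition~(7) of Theorem~\ref{t:ef:2}; this forces compactness of $Y$, hence (2). Conversely, (2) makes every pseudocompact $Y\subset C_{p}(X)$ compact, and condition~(6) of Theorem~\ref{t:ef:2} then furnishes density---in particular existence---of points of countable character, so $C_{p}(X)$ is a pf-space. The internal equivalences of (6a)--(6d) will follow by quantifying, over all separately continuous $\Phi: X\times Y\to\mathbb R$, the equivalences of conditions (1), (5), (7), (10) of Theorem~\ref{t:ef:1}; and $(1)\Leftrightarrow(6\text{b})$ is then immediate from the very definition of weakly pc-Grothendieck after renaming.

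The only step with genuinely new content is $(4)\Rightarrow(2)$: the pf-Grothendieck hypothesis supplies only existence of a point of countable character inside each pseudocompact $Y\subset C_{p}(X)$, whereas Theorem~\ref{t:ef:2} demands \emph{density} of points of countable $\pi$-character. Proposition~\ref{p:grot:0} is precisely the tool that performs this upgrade from existence to density inside pseudocompact subspaces of a pf-space; once that has been invoked, the remainder of the argument is pure bookkeeping.
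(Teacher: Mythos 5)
Your proposal is correct and follows essentially the same route as the paper: both reduce (5) and (6) to the packaged equivalences of Theorems~\ref{t:ef:2} and~\ref{t:ef:1}, identify (1)--(3) with (5a)--(5c), link (4) to (5) via the existence-versus-density observation with Proposition~\ref{p:grot:0} supplying the upgrade, and tie (1) to (6b) through the definition of weakly pc-Grothendieck. No gaps.
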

\begin{proof}
The equivalence of the conditions in (5) follows from \reftext{Theorem~\ref{t:ef:2}}. The equivalence
of the conditions in (6) follows from \reftext{Theorem~\ref{t:ef:1}}. Conditions (1),
(2) and (3) are exactly conditions (a), (b) and (c) in (5). From (f) in condition
(5) follows (4). \reftext{Proposition~\ref{p:grot:0}} and (4) imply (g) in condition
(5). Obviously, condition (a) in (5) is equivalent to (b) in (6).
\end{proof}

We are primarily interested in pseudocompact pc-Grothendieck spaces; this
class was denoted as $\mathcal{L}$ in \cite{Reznichenko1994} and
\cite{Korovin1992} and plays an important role in the study of groups with
topology.

\section{Functions of several variables}
\label{sec-fsv}

Let $\{X_{\alpha}\,:\, \alpha\in A\}$ be a family of sets, $Y$ a set,
$X=\prod_{\alpha\in A} X_{\alpha}$, $\Phi: X \to Y$ a mapping,
$B\subset A$, and 
$\bar x= (x_{\alpha})_{\alpha\in A\setminus B}\in\prod_{\alpha
\in A\setminus B} X_{\alpha}$. Let us define the mapping
\begin{equation*}
r(\Phi,X,\bar x): \prod_{\alpha\in B} X_{\alpha}\to Y,\ (x_{
\alpha})_{\alpha\in B}\mapsto\Phi(( x_{\alpha})_{\alpha\in A}).
\end{equation*}

%
\begin{definition}
\label{d:fsv:1}
Let $ A $ be a set, let $ \{X_{\alpha}\,:\, \alpha\in A\} $ be a family
of spaces, $Y$ be a space, and let
$ X = \prod_{\alpha\in A}X_{\alpha}$. Suppose given a map
$ \Phi: X \to Y $ and a positive integer $ n $.
\begin{itemize}
\item(Definition 3.25 of \cite{ReznichenkoUspenskij1998}) The map
$ \Phi$ is \textit{$ n $-separately continuous} iff
$ r (\Phi, X, \bar x)$ is continuous for each $ B \subset A $ with
$ | b | \le n $ and any
$ \bar x\in\prod_ {\alpha\in A \setminus B} X_ {\alpha}$.
\item(Definition 1 of \cite{Reznichenko2022}) The map $ \Phi$ is
\textit{$ n $-$ \beta$-extendable} iff $ g = r (\Phi, X, \bar x)$ extends
to a separately continuous map
$ \hat g: \prod_ {\alpha\in B} \beta X_ {\alpha}\to\beta Y $ for each
$ B \subset A $ with $ | b | \le n $ and any
$ \bar x\in\prod_ {\alpha\in A \setminus B} X_ {\alpha}$.
\item The map $ \Phi$ is \textit{$n$-quasicontinuous} iff
$ r (\Phi, X, \bar x)$ is quasicontinuous for each $ B \subset A $ with
$ | b | \le n $ and any
$ \bar x\in\prod_ {\alpha\in A \setminus B} X_ {\alpha}$.
\end{itemize}
\end{definition}

Separately continuous maps are exactly $ 1 $-separately continuous maps.

A space $X$ is Dieudonn\'e complete if it admits a compatible complete
uniformity. For a space $X$ the Dieudonn\'e completion $\mu X$ can be defined
as the smallest Dieudonn\'e complete subspace of
$\operatorname{\beta}X$ containing $X$. If X is pseudocompact, then
$\beta X$ = $\mu X$, and every continuous map $f: \beta X \to Y$ has an
extension $\hat f: \beta X \to\mu Y$.

Lemma 3.7 of \cite{ReznichenkoUspenskij1998} implies that if $\Phi$ is an
$n$-$\beta$-extendable function (that is, $Y=\mathbb R$), then
$\hat g(X)\subset\mathbb R$, where $g$ is defined in \reftext{Definition~\ref{d:fsv:1}}.

%
\begin{theorem}
\label{t:fsv:1}
Let $X_{1}$, $X_{2}$, ..., $X_{n}$ be pseudocompact spaces and let
$ \Phi: \prod_{i = 1}^{n} X_{i} \to\mathbb R$ be a separately continuous
function. The following conditions are equivalent.
\begin{itemize}
\item[\textup{(1)}] The function $\Phi$ extends to a separately continuous
function
$\widehat{\Phi}: \prod_ {i = 1} ^ {n} \beta X_{i} \to\mathbb R$.
\item[\textup{(2)}] The function $\Phi$ is $2$-$\beta$-extendable.
\item[\textup{(3)}] The function $\Phi$ is $2$-quasicontinuous.
\end{itemize}
\end{theorem}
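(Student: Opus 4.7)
My plan is to prove the equivalences through the cycle $(1)\Rightarrow(2)\Rightarrow(3)\Rightarrow(2)\Rightarrow(1)$, where the real substance lies in the fiberwise equivalence $(2)\Leftrightarrow(3)$, which reduces to Theorem~\ref{t:ef:1} applied to pairs of coordinates, and in the global extension step $(2)\Rightarrow(1)$, which I will attribute to the machinery of \cite{Reznichenko2022}.

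For $(1)\Rightarrow(2)$, suppose $\widehat{\Phi}:\prod_{i=1}^{n}\beta X_{i}\to\mathbb{R}$ is a separately continuous extension. For any $B\subset\{1,\dots,n\}$ with $|B|\le 2$ and any $\bar x\in\prod_{i\notin B}X_{i}$, I freeze the coordinates outside $B$ at $\bar x$, viewed as an element of the larger product $\prod_{i\notin B}\beta X_{i}$. The restriction of $\widehat{\Phi}$ is a separately continuous map $\prod_{i\in B}\beta X_{i}\to\mathbb{R}$ extending $r(\Phi,X,\bar x)$, so $\Phi$ is $2$-$\beta$-extendable.

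For $(2)\Leftrightarrow(3)$, the case $|B|=1$ is automatic in both directions since $r(\Phi,X,\bar x)$ is then a continuous real-valued function on a single pseudocompact $X_{i}$, hence trivially quasicontinuous and extendable to $\beta X_{i}$. For $|B|=\{i,j\}$, the restriction $r(\Phi,X,\bar x):X_{i}\times X_{j}\to\mathbb{R}$ is separately continuous on a product of two pseudocompact spaces, and Theorem~\ref{t:ef:1} gives directly the equivalence of its quasicontinuity (clause (2) there) and its extendability to a separately continuous function on $\beta X_{i}\times\beta X_{j}$ (clause (6) there). Applying this at every fiber gives $(2)\Leftrightarrow(3)$.

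The main content is $(2)\Rightarrow(1)$. The $2$-$\beta$-extendability hypothesis produces, for every pair of coordinates and every fiber, a separately continuous extension to the product of the corresponding $\beta X_{i}$'s, but assembling these fiberwise data into a single globally defined separately continuous map on $\prod_{i=1}^{n}\beta X_{i}$ is non-trivial. My plan is to invoke the extension theorems of \cite{Reznichenko2022} — explicitly foreshadowed by the introduction as the external ingredient of this section — which take a $2$-$\beta$-extendable function on a product and build the extension one coordinate at a time, verifying at each stage that separate continuity is preserved by controlling the two-variable slices. The main obstacle is precisely this assembly: the immediate two-variable tool, Proposition~\ref{p:ef:1}, only extends one coordinate when the complementary factor is pseudocompact, whereas at intermediate stages the complementary factor is a product of pseudocompact spaces mixed with some $\beta X_{k}$'s and need not be pseudocompact. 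It is for exactly this inductive pseudocompact-then-Stone--\v Cech situation that the refined extension technology of \cite{Reznichenko2022} is designed, and it is the essential ingredient closing the cycle.
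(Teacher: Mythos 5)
Your proposal is correct and follows essentially the same route as the paper: the paper also obtains $(2)\Leftrightarrow(3)$ by applying Theorem~\ref{t:ef:1} to the two-variable restrictions, and delegates the equivalence $(1)\Leftrightarrow(2)$ (including the nontrivial assembly step $(2)\Rightarrow(1)$ that you correctly flag as the main content) to Theorem~2 of \cite{Reznichenko2022}. Your extra detail on the easy direction $(1)\Rightarrow(2)$ and on the trivial $|B|=1$ case is sound but not a different argument.
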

\begin{proof}
The equivalence $(1)\Leftrightarrow(2)$ follows from
\cite[Theorem 2]{Reznichenko2022}. The equivalence
$(2)\Leftrightarrow(3)$ follows from \reftext{Theorem~\ref{t:ef:1}}.
\end{proof}

\reftext{Theorem~\ref{t:fsv:1}} and \cite[Lemma 3.7]{ReznichenkoUspenskij1998} imply
the following assertion.

%
\begin{theorem}
\label{t:fsv:2}
Let $X_{1}$, $X_{2}$, ..., $X_{n}$ be pseudocompact spaces, let $Y$ be a space,
and let $ \Phi: \prod_{i = 1} ^{n} X_{i} \to Y$ be a separately continuous
map. The following conditions are equivalent.
\begin{itemize}
\item[\textup{(1)}] The map $\Phi$ extends to a separately
continuous map
$\widehat{\Phi}: \prod_ {i = 1} ^ {n} \beta X_{i} \to\mu Y$.
\item[\textup{(2)}] The map $\Phi$ is $2$-$\beta$-extendable.
\item[\textup{(3)}] The map $\Phi$ is $2$-quasicontinuous.
\end{itemize}
\end{theorem}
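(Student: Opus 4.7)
The plan is to reduce the $Y$-valued statement to the real-valued Theorem~\ref{t:fsv:1} by testing against continuous real-valued functions on $Y$. The preparatory observation is that for a map $g:Z\to Y$ into a Tychonoff space, $g$ is quasi-continuous if and only if $f\circ g$ is quasi-continuous for every bounded continuous $f:Y\to\mathbb R$; the nontrivial direction is a short Urysohn argument, using an $f$ that vanishes at $g(z)$ and equals $1$ off a given neighborhood $O$ of $g(z)$. The same principle transfers separate continuity and $\beta$-extendability from the $Y$-valued level to the coordinatewise real-valued level via the canonical diagonal embedding $Y\hookrightarrow\mathbb R^{C(Y)}$.

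The implication $(1)\Rightarrow(2)$ is immediate: one restricts the global extension $\widehat\Phi:\prod_i\beta X_i\to\mu Y\subset\beta Y$ to a product of two factors with the remaining coordinates fixed in the corresponding $X_k$, and reads off the required pairwise separately continuous extension of $r(\Phi,X,\bar x)$. For $(2)\Leftrightarrow(3)$, both conditions concern only the restrictions $r(\Phi,X,\bar x):X_i\times X_j\to Y$ for pairs $B=\{i,j\}$; composing with any bounded continuous $f:Y\to\mathbb R$ produces a separately continuous real-valued function on a product of two pseudocompact spaces, and Theorem~\ref{t:fsv:1} with $n=2$ gives the equivalence between its $2$-$\beta$-extendability and its $2$-quasi-continuity. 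Running this uniformly over $f$ and over pairs, and reassembling via the preparatory reduction, transfers the equivalence back to $\Phi$ itself.

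The substantive direction is $(3)\Rightarrow(1)$. For each continuous $h:Y\to\mathbb R$, the composition $h\circ\Phi:\prod_i X_i\to\mathbb R$ inherits separate continuity and $2$-quasi-continuity from $\Phi$, so Theorem~\ref{t:fsv:1} supplies a separately continuous extension $\Psi_h:\prod_i\beta X_i\to\beta\mathbb R$; Lemma~3.7 of \cite{ReznichenkoUspenskij1998} then forces $\Psi_h$ to be $\mathbb R$-valued, even when $h$ is unbounded. Packaging the family $(\Psi_h)_{h\in C(Y)}$ coordinatewise through the diagonal embedding $Y\hookrightarrow\mathbb R^{C(Y)}$ produces a separately continuous map $\widehat\Phi:\prod_i\beta X_i\to\mathbb R^{C(Y)}$ extending $\Phi$; separate continuity into the product is automatic from separate continuity of each $\Psi_h$. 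The main obstacle is to confirm that the image of $\widehat\Phi$ is actually contained in the Dieudonn\'e completion $\mu Y$ rather than in some larger subspace of $\mathbb R^{C(Y)}$, and this upgrade is exactly what the invocation of Lemma~3.7 for all (in particular unbounded) continuous $h$ delivers.
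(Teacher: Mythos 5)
Your proposal is correct and follows essentially the same route as the paper, whose entire proof is the one‑line remark that Theorem~\ref{t:fsv:1} together with Lemma~3.7 of \cite{ReznichenkoUspenskij1998} implies the assertion: you reduce to the real‑valued case by composing with continuous functions $h\in C(Y)$, apply Theorem~\ref{t:fsv:1}, and reassemble into $\mu Y$ via that lemma. Your write‑up merely makes explicit the details (the Urysohn test for quasi‑continuity, the diagonal embedding) that the paper leaves to the cited lemma.
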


%
\begin{theorem}[{\cite[Theorem 3.15.]{ReznichenkoUspenskij1998}}]
\label{t:fsv:3}
Let $X_{1}$, $X_{2}$, ..., $X_{n}$ be pseudocompact spaces such that
$(X_{i}, X_{j})$ is a Grothendieck pair for all distinct $i, j$, let $Y$ be
a space, and let $ \Phi: \prod_ {i = 1} ^ {n} X_{i} \to Y$ be a separately
continuous map. Then the map $\Phi$ extends to a separately continuous map
$\widehat{\Phi}: \prod_ {i = 1} ^ {n} \beta X_{i} \to\mu Y$.
\end{theorem}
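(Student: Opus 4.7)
The strategy is to reduce to Theorem~\ref{t:fsv:2} by verifying condition (3), namely that $\Phi$ is $2$-quasicontinuous. This means checking, for every $B\subset\{1,\dots,n\}$ with $|B|\le 2$ and every $\bar x\in\prod_{k\notin B}X_k$, that the restriction $g=r(\Phi,X,\bar x)$ is quasicontinuous. The case $|B|\le 1$ is immediate because such a $g$ is already continuous (a consequence of separate continuity of $\Phi$ and the fact that fixing all other coordinates gives a map continuous in the remaining one). The content lies in the case $|B|=2$.

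So fix distinct indices $i,j$ and $\bar x\in\prod_{k\notin\{i,j\}}X_k$, and consider $g=r(\Phi,X,\bar x)\colon X_i\times X_j\to Y$, which is separately continuous. I want to show $g$ is quasicontinuous. Take a point $(a,b)\in X_i\times X_j$, a neighborhood $O$ of $g(a,b)$ in $Y$, and a basic neighborhood $W$ of $(a,b)$. Since $Y$ is Tychonoff, pick a continuous $h\colon Y\to[0,1]$ with $h(g(a,b))=0$ and $h\equiv 1$ on $Y\setminus O$. Then $h\circ g\colon X_i\times X_j\to\mathbb R$ is separately continuous, $X_i$ and $X_j$ are pseudocompact, and by hypothesis $(X_i,X_j)$ is a Grothendieck pair, i.e.\ condition (5) of Theorem~\ref{t:ef:1} holds for $h\circ g$. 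The equivalence $(5)\Leftrightarrow(2)$ in Theorem~\ref{t:ef:1} then gives that $h\circ g$ is quasicontinuous.

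Applied at $(a,b)$ with target neighborhood $[0,\tfrac12)$ of $0=(h\circ g)(a,b)$ and source neighborhood $W$, quasicontinuity yields a nonempty open $U\subset W$ with $(h\circ g)(U)\subset[0,\tfrac12)$. Since $h\equiv 1$ on $Y\setminus O$, this forces $g(U)\subset O$, so $g$ is quasicontinuous at $(a,b)$. As $(a,b)$ and the data $O,W$ were arbitrary, $g$ is quasicontinuous, which establishes $2$-quasicontinuity of $\Phi$. Theorem~\ref{t:fsv:2} now delivers the separately continuous extension $\widehat{\Phi}\colon\prod_{i=1}^{n}\beta X_i\to\mu Y$.

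The only delicate point is that Theorem~\ref{t:ef:1} is stated exclusively for real-valued functions, whereas here $Y$ is an arbitrary Tychonoff space; the Tychonoff separation step (choosing $h$) is what bridges this gap, and it is routine but essential. Everything else is a straightforward assembly of the already proved machinery: separate continuity of the restrictions, the Grothendieck-pair hypothesis on two-variable slices, and the criterion of Theorem~\ref{t:fsv:2}.
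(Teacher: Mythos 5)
Your argument is correct, but there is nothing in the paper to compare it with: Theorem~\ref{t:fsv:3} is imported by citation as Theorem 3.15 of \cite{ReznichenkoUspenskij1998} and given no proof here; the original proof works in the function-space setting of that earlier paper and does not use quasicontinuity. What you have done is rederive the result inside the present paper's framework by verifying condition (3) of Theorem~\ref{t:fsv:2}, and every step checks out. The reduction to two-variable slices is exactly what $2$-quasicontinuity asks for, the cases $|B|\le 1$ are indeed trivial from separate continuity, and the Grothendieck-pair hypothesis for $(X_i,X_j)$ is literally condition ($C_3$), i.e.\ condition (5) of Theorem~\ref{t:ef:1} for the real-valued map $h\circ g$, so the equivalence $(5)\Leftrightarrow(2)$ there applies. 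The one genuinely necessary piece of care is the one you flag: Theorem~\ref{t:ef:1} concerns only real-valued functions, while $g$ takes values in an arbitrary Tychonoff $Y$, and your separation argument (choose $h$ with $h(g(a,b))=0$ and $h\equiv 1$ off $O$, get $(h\circ g)(U)\subset[0,1/2)$, conclude $g(U)\subset O$) correctly transfers quasicontinuity from the composites to $g$; it also implicitly supplies the definition of quasicontinuity for maps into non-metric targets, which the paper uses in Definition~\ref{d:fsv:1} without restating. The net effect is a short, self-contained proof that makes Theorem~\ref{t:fsv:3} a corollary of Theorems~\ref{t:ef:1} and~\ref{t:fsv:2} rather than an external import --- arguably a cleaner arrangement than the paper's, at the cost of resting on Theorem~\ref{t:fsv:2}, which the paper itself only sketches.
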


%
\begin{cor}
\label{c:fsv:1}
Let $X$ be a pseudocompact pc-Grothendieck space, let $Y$ be a space, and let
$ \Phi: X^{n} \to Y$ be a separately continuous map. Then map
$\Phi$ extends to a separately continuous map
$\widehat{\Phi}: \beta X^{n} \to\mu Y$.
\end{cor}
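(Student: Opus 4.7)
The plan is to reduce the corollary directly to \reftext{Theorem~\ref{t:fsv:3}} by verifying its hypothesis, namely that $(X_i, X_j)$ is a Grothendieck pair for every pair of distinct indices. Since in our situation every factor equals the same space $X$, the task collapses to checking that $(X, X)$ itself is a Grothendieck pair.

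For this I would invoke \reftext{Theorem~\ref{t:grot:1}}. By assumption $X$ is pseudocompact and pc-Grothendieck, i.e. condition (2) of that theorem holds. Therefore, all equivalent conditions hold; in particular, condition (6)(a) states that $(X, Z)$ is a Grothendieck pair for every pseudocompact space $Z$. Taking $Z = X$, which is pseudocompact by hypothesis, we conclude that $(X, X)$ is a Grothendieck pair.

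Having verified the hypothesis of \reftext{Theorem~\ref{t:fsv:3}} with $X_1 = X_2 = \dots = X_n = X$, we apply that theorem to the separately continuous map $\Phi \colon X^n \to Y$ and obtain a separately continuous extension $\widehat{\Phi}\colon \beta X^n \to \mu Y$, which is exactly the statement of the corollary.

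There is no real obstacle here: the corollary is a routine specialization of \reftext{Theorem~\ref{t:fsv:3}}, with \reftext{Theorem~\ref{t:grot:1}} supplying the needed Grothendieck-pair property along the diagonal. The only subtlety worth noting is that it is essential to have the equivalence of pc-Grothendieck with the Grothendieck-pair condition against \emph{arbitrary} pseudocompact partners (not only metrizable or first-countable ones), which is precisely what \reftext{Theorem~\ref{t:ef:1}} provides and what makes \reftext{Theorem~\ref{t:grot:1}}(6) applicable to the partner $Z = X$.
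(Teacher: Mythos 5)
Your proof is correct and follows the route the paper intends: the corollary is stated as an immediate consequence of Theorem~\ref{t:fsv:3}, with the pc-Grothendieck hypothesis supplying the Grothendieck-pair condition for $(X,X)$ via Theorem~\ref{t:grot:1}(6). No gaps.
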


\section{Pseudocompact groups and spaces with a Mal'tsev operation}
\label{sec-maltsev}

A group with a topology is called \textit{semitopological} if multiplication
in the group is separately continuous.

%
\begin{theorem}
\label{t:maltsev:1}
Let $G$ be a pseudocompact semitopological group. The following conditions
are equivalent.
\begin{itemize}
\item[\textup{(1)}] The group $G$ is a topological group.
\item[\textup{(2)}] The multiplication
\begin{equation*}
{\mathfrak{m}}: G\times G\to G,\ (g,h)\mapsto gh
\end{equation*}
in the group $G$ extends to a separately continuous mapping
\begin{equation*}
\hat{{\mathfrak{m}}}: \beta G\times\beta G\to\beta G.
\end{equation*}
\item[\textup{(3)}] The multiplication ${\mathfrak{m}}$ extends to a
separately
continuous mapping $\hat{{\mathfrak{m}}}: (\beta G)^{2}\to\beta G$ and
$(\beta G,\hat{{\mathfrak{m}}})$ is a topological group.
\item[\textup{(4)}] The multiplication ${\mathfrak{m}}$ is quasi-continuous.
\end{itemize}
\end{theorem}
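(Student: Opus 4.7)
The plan is to close the cycle $(1)\Rightarrow(4)\Rightarrow(2)\Rightarrow(3)\Rightarrow(1)$. The two extremal links are essentially free: $(1)\Rightarrow(4)$ because joint continuity implies quasi-continuity, and $(3)\Rightarrow(1)$ because $G$ then sits as a subgroup of the topological group $(\beta G,\hat{\mathfrak m})$ and so inherits topological-group structure.

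For $(4)\Rightarrow(2)$, I would apply Theorem~\ref{t:fsv:2} with $n=2$ and $X_1=X_2=Y=G$: since $G$ is pseudocompact, $\mu G=\beta G$, and the equivalence (1)$\Leftrightarrow$(3) of that theorem reduces the existence of a separately continuous extension of $\mathfrak m$ to $\beta G\times\beta G$ to the $2$-quasi-continuity of $\mathfrak m$. For a separately continuous map of two variables this is just quasi-continuity of $\mathfrak m$ itself, since in Definition~\ref{d:fsv:1} the restrictions indexed by $|B|\le 1$ are already continuous (hence quasi-continuous) by separate continuity, while the case $|B|=2$ is exactly hypothesis~(4).

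The main work is $(2)\Rightarrow(3)$. Given the separately continuous extension $\hat{\mathfrak m}\colon\beta G\times\beta G\to\beta G$, I would verify in order that $(\beta G,\hat{\mathfrak m})$ is (a) associative, (b) unital with two-sided identity $e$, (c) algebraically a group, and (d) a topological group. Parts (a) and (b) are standard density arguments: the identities $(xy)z=x(yz)$ on $G^3$ and $ex=xe=x$ on $G$ spread to $(\beta G)^3$ and $\beta G$ respectively by propagating one coordinate at a time, each time fixing the remaining arguments and invoking continuity of the partial maps $L_a,R_a$ for $a\in\beta G$ supplied by separate continuity of $\hat{\mathfrak m}$. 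Part (d) is Ellis's theorem: a compact Hausdorff semitopological group is automatically a topological group.

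The one genuinely difficult step is (c), the existence of inverses in $\beta G$. Here I would appeal to the classical fact that a compact Hausdorff semitopological semigroup with identity that contains a dense subgroup must itself be a group. For each $x\in\beta G$, the sets $x\cdot\beta G$ and $\beta G\cdot x$ are closed subsemigroups of $\beta G$, and combining the Ellis--Numakura idempotent lemma with the cancellativity inherited from the dense subgroup $G$ yields the required two-sided inverse of $x$. Alternatively, one can simply cite the corresponding statement from \cite{Reznichenko1994} or \cite{Korovin1992}, where pseudocompact semitopological groups in the class $\mathcal L$ are developed with exactly such applications in mind.
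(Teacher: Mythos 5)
Your cycle $(1)\Rightarrow(4)\Rightarrow(2)\Rightarrow(3)\Rightarrow(1)$ is sound except at one point, and that point is the heart of the theorem. The links $(1)\Rightarrow(4)$ and $(3)\Rightarrow(1)$ are indeed free, and your derivation of $(4)\Rightarrow(2)$ from Theorem~\ref{t:fsv:2} (with the observation that, for a separately continuous map of two variables, $2$-quasi-continuity reduces to quasi-continuity) is exactly the paper's argument for $(2)\Leftrightarrow(4)$. The gap is in step (c) of your $(2)\Rightarrow(3)$: the ``classical fact'' that a compact Hausdorff semitopological semigroup with identity containing a dense subgroup must be a group is false. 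The weakly almost periodic compactification of $\mathbb Z$ is a compact \emph{commutative} semitopological semigroup with identity in which $\mathbb Z$ sits as a dense subgroup, yet it is not a group (it contains idempotents other than the identity). The same example shows that cancellativity is \emph{not} inherited from a dense subgroup, so the Ellis--Numakura idempotent you produce in $x\cdot\beta G$ cannot be identified with $e$ by cancellation. Any correct proof of $(2)\Rightarrow(3)$ must use the pseudocompactness of $G$ in an essential way, and your argument never does.

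Your fallback --- citing \cite{Reznichenko1994} or \cite{Korovin1992} for the group structure of $\beta G$ --- would close the gap, but at that point you have reproduced the paper's own route: the paper deduces $(2)\Rightarrow(1)$ from Theorem~2.2 and Assertion~2.1 of \cite{Reznichenko1994} (this is precisely the nontrivial statement that a pseudocompact semitopological group whose multiplication extends separately continuously to $\beta G\times\beta G$ is a topological group), and then gets $(1)\Rightarrow(3)$ from the Comfort--Ross theorem, which supplies the topological group structure on $\beta G$. So either your proof has a genuine hole at the existence of inverses, or it collapses into the paper's proof; as written it is the former. The remaining pieces of your $(2)\Rightarrow(3)$ --- associativity and the identity by one-coordinate-at-a-time density propagation, and Ellis's theorem for part (d) --- are fine.
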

\begin{proof}
$(1)\Rightarrow(3)$ This implication follows from the Comfort--Ross theorem
\cite[Theorem 4.1]{ComfortRoss1966}.

$(3)\Rightarrow(2)$ Obvious.

$(2)\Rightarrow(1)$ This implication follows from Theorem 2.2 and Assertion
2.1 in \cite{Reznichenko1994}.

$(2)\Leftrightarrow(4)$ This implication follows from  \reftext{Theorem~\ref{t:fsv:2}}.
\end{proof}

The implication $(1)\Rightarrow(4)$ can also be proved by using results of
\cite{moors2017} or \cite{rezn2022-1}.

A \textit{Mal'tsev operation} on a set $X$ is a map $M: X^{3}\to X$ satisfying
the identity $M(x,y,y)=M(y,y,x)=x$ for all $x,y\in X$. A space is
\textit{Mal'tsev}
if it admits a continuous Mal'tsev operation.

%
\begin{theorem}
\label{t:maltsev:2}
Let $X$ be a pseudocompact space with a separately continuous Mal'tsev operation 
$M$. The following conditions are equivalent.
\begin{itemize}
\item[\textup{(1)}] The Mal'tsev operation $M$ extends to a
separately continuous
mapping $\widehat{M}: (\beta X)^{3}\to\beta X$.
\item[\textup{(2)}] The Mal'tsev operation $M$ extends to a
separately continuous
map $\widehat{M}: (\beta X)^{3}\to\beta X$ and $\widehat{M}$ is a Mal'tsev
operation.
\item[\textup{(3)}] The Mal'tsev operation $M$ is $2$-quasicontinuous.
\end{itemize}
If any of the above conditions is satisfied, then $\beta X$ is a Dugundji
compactum.
\end{theorem}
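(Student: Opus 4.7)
The implications split as $(2)\Rightarrow(1)$ (trivial), $(1)\Leftrightarrow(3)$, and $(1)\Rightarrow(2)$ (the substantive step), followed by the Dugundji assertion. For $(1)\Leftrightarrow(3)$ I would apply \reftext{Theorem~\ref{t:fsv:2}} with $n=3$, each $X_i=X$, and $Y=X$; since $X$ is pseudocompact one has $\mu X=\beta X$, so the extensions produced land in $\beta X$ and the three equivalent conditions there translate directly into (1) and (3) of the present theorem.

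The heart of the proof is $(1)\Rightarrow(2)$: given a separately continuous extension $\widehat M\colon(\beta X)^3\to\beta X$, one must verify the Mal'tsev identities $\widehat M(x,y,y)=x=\widehat M(y,y,x)$ for all $x,y\in\beta X$. The easy half is $y\in X$: then $\widehat M(\cdot,y,y)\colon\beta X\to\beta X$ is continuous by separate continuity of $\widehat M$ in its first slot, agrees with the identity on the dense subset $X$ (by the Mal'tsev identity for $M$), and hence is the identity on all of $\beta X$. The symmetric argument handles $\widehat M(y,y,\cdot)$ for $y\in X$.

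The main obstacle is extending the identity to $y\in\beta X\setminus X$, because the diagonal restriction $y\mapsto\widehat M(x,y,y)$ of the separately continuous map $\widehat M(x,\cdot,\cdot)\colon(\beta X)^2\to\beta X$ need not be continuous. My plan is to invoke the $2$-quasicontinuity of $M$ (given by $(1)\Rightarrow(3)$) together with a Namioka-type argument: applied coordinatewise (viewing $\beta X$ as a subspace of a Tychonoff cube), \reftext{Theorem~\ref{t:ef:1}} gives, for each continuous $f\colon\beta X\to\mathbb R$, a dense $G_\delta$ set $D_{x,f}\subset\beta X$ such that $(y,z)\mapsto f(\widehat M(x,y,z))$ is jointly continuous on $\beta X\times D_{x,f}$. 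Combined with the easy case and density of $X$, this forces $f(\widehat M(x,y,y))=f(x)$ for every $y\in D_{x,f}$, and then separate continuity of $\widehat M$ in the remaining slots, iterated along the lines of the arguments in \reftext{Proposition~\ref{p:ef:1}} and \reftext{Theorem~\ref{t:ef:1}}, should propagate the equality from the dense set $X\cup D_{x,f}$ to all of $\beta X$. I expect this propagation to be the main technical difficulty.

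Finally, for the Dugundji assertion, once $\widehat M$ has been shown to be a Mal'tsev operation on $\beta X$, one invokes Uspenskij's theorem that every compactum admitting a continuous Mal'tsev operation is a Dugundji compactum; the remaining upgrade from separate to joint continuity of $\widehat M$ on $(\beta X)^3$ should follow from the Mal'tsev identities together with compactness, possibly via a dedicated result on compact spaces with a Mal'tsev structure.
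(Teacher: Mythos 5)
Your decomposition ($(2)\Rightarrow(1)$ trivial, $(1)\Leftrightarrow(3)$ via Theorem~\ref{t:fsv:2} with $\mu X=\beta X$, and $(1)\Rightarrow(2)$ as the substantive step, with the easy case $y\in X$ handled by continuity of $\widehat M(\cdot,y,y)$ and density of $X$) matches the paper exactly up to the crux. But at the crux --- extending the identity $\widehat M(x,y,y)=x$ from $y\in X$ to all $y\in\beta X$ --- your proposal has a genuine gap rather than a proof. The Namioka-type argument you sketch produces, for each fixed $x$ and each continuous $f$, a dense $G_\delta$ set $D_{x,f}\subset\beta X$ of points of joint continuity of $(y,z)\mapsto f(\widehat M(x,y,z))$ in the second pair of variables, and this does yield $f(\widehat M(x,y,y))=f(x)$ for $y\in D_{x,f}$. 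However, to conclude the identity at an arbitrary point $y_0\in\beta X$ you would need either $y_0\in D_{x,f}$ or a further mechanism for passing from a dense $G_\delta$ of the diagonal to the whole diagonal; the diagonal restriction $y\mapsto\widehat M(x,y,y)$ of a separately continuous map is not continuous, so density alone does not propagate the equality, and intersecting the sets $D_{x,f}$ over all $f\in C(\beta X)$ is not available. This ``propagation'' step, which you explicitly defer as the main technical difficulty, is precisely the content that is missing. The paper closes it with a single citation: Proposition~3.12 of \cite{ReznichenkoUspenskij1998}, which is tailored to exactly this situation (a separately continuous map on a product of compacta whose diagonal values agree with a continuous map on a dense set agree everywhere). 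Without that proposition or a full proof of an equivalent statement, your argument for $(1)\Rightarrow(2)$ does not go through.

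A secondary, smaller issue concerns the Dugundji assertion. You propose first to upgrade $\widehat M$ from separate to joint continuity on $(\beta X)^3$ and then to apply Uspenskij's theorem for \emph{continuous} Mal'tsev operations; the upgrade is itself a nontrivial claim that you leave to an unspecified ``dedicated result.'' This detour is unnecessary: the paper invokes Theorem~1.8 of \cite{ReznichenkoUspenskij1998}, which states directly that compact spaces with a \emph{separately} continuous Mal'tsev operation are Dugundji compacta, so no continuity upgrade is needed.
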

\begin{proof}
$(1)\Rightarrow(2)$ For $x,y,z\in\beta X$ we set
$f_{x}(y,z)=\widehat{M}(x,y,z)$ and $g_{y}(x)= \widehat{M}(x,y,y)$. If
$y\in X$, then $g_{y}(x)=x$ for all $x\in X$. Since the mapping
$g_{y}$ is continuous, then $g_{y}(x)=x$ for all $x\in\beta X$. Hence
$f_{x}(y,y)=x$ for all $x\in\beta X$ and $y\in X$.
Proposition 3.12 of \cite{ReznichenkoUspenskij1998} implies that
$f_{x}(y,y)=x$ for all $y\in\beta X$. We have proved the identity
$\widehat{M}(x,y,y)=x$ for $x,y\in\beta X$. The identity
$\widehat{M}(y,y,x)=x$ is proved similarly.

$(2)\Rightarrow(1)$ Obvious.

$(1)\Leftrightarrow(3)$ This implication follows from \reftext{Theorem~\ref{t:fsv:2}}.

It follows from (2) that $\beta X$ is a compact space  with a separately continuous
Mal'tsev operation $\widehat{M}$. Compact spaces with separately continuous
Mal'tsev operation are Dugundji compact sets
\cite[Theorem 1.8]{ReznichenkoUspenskij1998}.
\end{proof}

\reftext{Corollary~\ref{c:fsv:1}}, \reftext{Theorems~\ref{t:maltsev:1} and \ref{t:maltsev:2}} imply the following assertions.

%
\begin{cor}
\label{c:maltsev:1}
Let $G$ be a pseudocompact pc-Grothendieck semitopological group. Then
$G$ is a topological group.
\end{cor}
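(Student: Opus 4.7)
The plan is to combine the pc-Grothendieck hypothesis with the characterization of topological groups among pseudocompact semitopological groups provided by Theorem~\ref{t:maltsev:1}. Concretely, I would try to verify condition~(2) of Theorem~\ref{t:maltsev:1}, namely that the multiplication $\mathfrak{m}:G\times G\to G$ extends to a separately continuous map $\hat{\mathfrak{m}}:\beta G\times\beta G\to\beta G$; once this is done, $(2)\Rightarrow(1)$ of that theorem gives the conclusion immediately.

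To establish (2), I would apply Corollary~\ref{c:fsv:1} with $X=G$, $n=2$, and target space $Y=G$, viewing the multiplication $\mathfrak{m}:G^{2}\to G$ as a separately continuous map (this is exactly what being semitopological means). Since $G$ is pseudocompact and pc-Grothendieck, the hypotheses of Corollary~\ref{c:fsv:1} are satisfied, so $\mathfrak{m}$ extends to a separately continuous map $\hat{\mathfrak{m}}:(\beta G)^{2}\to\mu G$. Because $G$ is pseudocompact, the Dieudonn\'e completion coincides with the Stone--\v{C}ech compactification, $\mu G=\beta G$, so the extension in fact lands in $\beta G$. This is precisely condition~(2) of Theorem~\ref{t:maltsev:1}, and the equivalence $(2)\Leftrightarrow(1)$ there yields that $G$ is a topological group.

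There is essentially no obstacle beyond chaining the two cited results; the only point that requires a word of care is the identification $\mu G=\beta G$ for pseudocompact $G$, which is recalled in the paragraph preceding Theorem~\ref{t:fsv:1} and guarantees that the target of the extension provided by Corollary~\ref{c:fsv:1} is exactly the space demanded by Theorem~\ref{t:maltsev:1}.
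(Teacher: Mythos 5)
Your argument is correct and is exactly the chain the paper intends: apply Corollary~\ref{c:fsv:1} with $n=2$, $Y=G$ to get a separately continuous extension $\hat{\mathfrak{m}}:(\beta G)^{2}\to\mu G=\beta G$ (using pseudocompactness of $G$), which is condition~(2) of Theorem~\ref{t:maltsev:1}, and then invoke $(2)\Rightarrow(1)$ there. No gaps.
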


%
\begin{cor}
\label{c:maltsev:2}
Let $X$ be a pseudocompact pc-Grothendieck space with a separately continuous
Mal'tsev operation $M$. Then the Mal'tsev operation $M$ extends to a separately
continuous Mal'tsev operation
$\widehat{M}: (\beta X)^{3}\to\beta X$ and $\beta X$ is a Dugundji compactum.
\end{cor}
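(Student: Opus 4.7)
The plan is to combine Corollary~\ref{c:fsv:1} with Theorem~\ref{t:maltsev:2} in a direct way, with essentially no new work to do. Since $X$ is pseudocompact and pc-Grothendieck, I would apply Corollary~\ref{c:fsv:1} with $n=3$ and target space $Y=X$ to the separately continuous map $M\colon X^{3}\to X$. This produces a separately continuous extension $\widehat{M}\colon(\beta X)^{3}\to\mu X$. Because $X$ is pseudocompact we have $\mu X=\beta X$, so $\widehat{M}$ in fact takes values in $\beta X$.

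At this point condition~(1) of Theorem~\ref{t:maltsev:2} is met: $M$ extends to a separately continuous map $\widehat{M}\colon(\beta X)^{3}\to\beta X$. Invoking the equivalence $(1)\Rightarrow(2)$ in Theorem~\ref{t:maltsev:2} yields that $\widehat{M}$ is itself a Mal'tsev operation on $\beta X$. The concluding sentence of Theorem~\ref{t:maltsev:2} then gives that $\beta X$ is a Dugundji compactum, which is the last clause of the statement.

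There is no genuine obstacle here: the heavy lifting---the extension of a separately continuous function of several variables from a product of pseudocompact pc-Grothendieck factors---has already been absorbed into Corollary~\ref{c:fsv:1}, and the transfer of the Mal'tsev identities $M(x,y,y)=M(y,y,x)=x$ from $X$ to $\beta X$ has been handled inside the proof of Theorem~\ref{t:maltsev:2} via the density of $X$ in $\beta X$ and Proposition~3.12 of \cite{ReznichenkoUspenskij1998}. The only point requiring a moment's attention is the identification $\mu X=\beta X$ so that the extension provided by Corollary~\ref{c:fsv:1} actually lands in $\beta X$ and can be fed into Theorem~\ref{t:maltsev:2}.
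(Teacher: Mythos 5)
Your proposal is correct and matches the paper's intended argument: the paper derives Corollary~\ref{c:maltsev:2} precisely by feeding the extension from Corollary~\ref{c:fsv:1} (with $\mu X=\beta X$ for pseudocompact $X$) into the implication $(1)\Rightarrow(2)$ and the final clause of Theorem~\ref{t:maltsev:2}. No gaps.
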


\bibliographystyle{elsarticle-num}
\bibliography{sqcsc}

\end{document}